\newtheorem{theorem}{Theorem}[section]
\newtheorem{lemma}[theorem]{Lemma}
\newtheorem{corollary}[theorem]{Corollary}
\begin{document}
%
%

\long\def\ig#1{\relax}
\ig{Thanks to Roberto Minio for this def'n.  Compare the def'n of
\comment in AMSTeX.}

\newcount \coefa
\newcount \coefb
\newcount \coefc
\newcount\tempcounta
\newcount\tempcountb
\newcount\tempcountc
\newcount\tempcountd
\newcount\xext
\newcount\yext
\newcount\xoff
\newcount\yoff
\newcount\gap%
\newcount\arrowtypea
\newcount\arrowtypeb
\newcount\arrowtypec
\newcount\arrowtyped
\newcount\arrowtypee
\newcount\height
\newcount\width
\newcount\xpos
\newcount\ypos
\newcount\run
\newcount\rise
\newcount\arrowlength
\newcount\halflength
\newcount\arrowtype
\newdimen\tempdimen
\newdimen\xlen
\newdimen\ylen
\newsavebox{\tempboxa}%
\newsavebox{\tempboxb}%
\newsavebox{\tempboxc}%

\makeatletter
\setlength{\unitlength}{.01em}%
\def\settypes(#1,#2,#3){\arrowtypea#1 \arrowtypeb#2 \arrowtypec#3}
\def\settoheight#1#2{\setbox\@tempboxa\hbox{#2}#1\ht\@tempboxa\relax}%
\def\settodepth#1#2{\setbox\@tempboxa\hbox{#2}#1\dp\@tempboxa\relax}%
\def\settokens[#1`#2`#3`#4]{%
     \def\tokena{#1}\def\tokenb{#2}\def\tokenc{#3}\def\tokend{#4}}
\def\setsqparms[#1`#2`#3`#4;#5`#6]{%
\arrowtypea #1
\arrowtypeb #2
\arrowtypec #3
\arrowtyped #4
\width #5
\height #6
}
\def\setpos(#1,#2){\xpos=#1 \ypos#2}

\def\bfig{\begin{picture}(\xext,\yext)(\xoff,\yoff)}
\def\efig{\end{picture}}

\def\putbox(#1,#2)#3{\put(#1,#2){\makebox(0,0){$#3$}}}

\def\settriparms[#1`#2`#3;#4]{\settripairparms[#1`#2`#3`1`1;#4]}%

\def\settripairparms[#1`#2`#3`#4`#5;#6]{%
\arrowtypea #1
\arrowtypeb #2
\arrowtypec #3
\arrowtyped #4
\arrowtypee #5
\width #6
\height #6
}

\def\resetparms{\settripairparms[1`1`1`1`1;500]\width 500}

\resetparms

\def\mvector(#1,#2)#3{
\put(0,0){\vector(#1,#2){#3}}%
\put(0,0){\vector(#1,#2){30}}%
}
\def\evector(#1,#2)#3{{
\arrowlength #3
\put(0,0){\vector(#1,#2){\arrowlength}}%
\advance \arrowlength by-30
\put(0,0){\vector(#1,#2){\arrowlength}}%
}}

\def\horsize#1#2{%
\settowidth{\tempdimen}{$#2$}%
#1=\tempdimen
\divide #1 by\unitlength
}

\def\vertsize#1#2{%
\settoheight{\tempdimen}{$#2$}%
#1=\tempdimen
\settodepth{\tempdimen}{$#2$}%
\advance #1 by\tempdimen
\divide #1 by\unitlength
}

\def\vertadjust[#1`#2`#3]{%
\vertsize{\tempcounta}{#1}%
\vertsize{\tempcountb}{#2}%
\ifnum \tempcounta<\tempcountb \tempcounta=\tempcountb \fi
\divide\tempcounta by2
\vertsize{\tempcountb}{#3}%
\ifnum \tempcountb>0 \advance \tempcountb by20 \fi
\ifnum \tempcounta<\tempcountb \tempcounta=\tempcountb \fi
}

\def\horadjust[#1`#2`#3]{%
\horsize{\tempcounta}{#1}%
\horsize{\tempcountb}{#2}%
\ifnum \tempcounta<\tempcountb \tempcounta=\tempcountb \fi
\divide\tempcounta by20
\horsize{\tempcountb}{#3}%
\ifnum \tempcountb>0 \advance \tempcountb by60 \fi
\ifnum \tempcounta<\tempcountb \tempcounta=\tempcountb \fi
}

\ig{ In this procedure, #1 is the paramater that sticks out all the way,
#2 sticks out the least and #3 is a label sticking out half way.  #4 is
the amount of the offset.}

\def\sladjust[#1`#2`#3]#4{%
\tempcountc=#4
\horsize{\tempcounta}{#1}%
\divide \tempcounta by2
\horsize{\tempcountb}{#2}%
\divide \tempcountb by2
\advance \tempcountb by-\tempcountc
\ifnum \tempcounta<\tempcountb \tempcounta=\tempcountb\fi
\divide \tempcountc by2
\horsize{\tempcountb}{#3}%
\advance \tempcountb by-\tempcountc
\ifnum \tempcountb>0 \advance \tempcountb by80\fi
\ifnum \tempcounta<\tempcountb \tempcounta=\tempcountb\fi
\advance\tempcounta by20
}

\def\putvector(#1,#2)(#3,#4)#5#6{{%
\xpos=#1
\ypos=#2
\run=#3
\rise=#4
\arrowlength=#5
\arrowtype=#6
\ifnum \arrowtype<0
    \ifnum \run=0
        \advance \ypos by-\arrowlength
    \else
        \tempcounta \arrowlength
        \multiply \tempcounta by\rise
        \divide \tempcounta by\run
        \ifnum\run>0
            \advance \xpos by\arrowlength
            \advance \ypos by\tempcounta
        \else
            \advance \xpos by-\arrowlength
            \advance \ypos by-\tempcounta
        \fi
    \fi
    \multiply \arrowtype by-1
    \multiply \rise by-1
    \multiply \run by-1
\fi
\ifnum \arrowtype=1
    \put(\xpos,\ypos){\vector(\run,\rise){\arrowlength}}%
\else\ifnum \arrowtype=2
    \put(\xpos,\ypos){\mvector(\run,\rise)\arrowlength}%
\else\ifnum\arrowtype=3
    \put(\xpos,\ypos){\evector(\run,\rise){\arrowlength}}%
\fi\fi\fi
}}

\def\putsplitvector(#1,#2)#3#4{
\xpos #1
\ypos #2
\arrowtype #4
\halflength #3
\arrowlength #3
\gap 140
\advance \halflength by-\gap
\divide \halflength by2
\ifnum \arrowtype=1
    \put(\xpos,\ypos){\line(0,-1){\halflength}}%
    \advance\ypos by-\halflength
    \advance\ypos by-\gap
    \put(\xpos,\ypos){\vector(0,-1){\halflength}}%
\else\ifnum \arrowtype=2
    \put(\xpos,\ypos){\line(0,-1)\halflength}%
    \put(\xpos,\ypos){\vector(0,-1)3}%
    \advance\ypos by-\halflength
    \advance\ypos by-\gap
    \put(\xpos,\ypos){\vector(0,-1){\halflength}}%
\else\ifnum\arrowtype=3
    \put(\xpos,\ypos){\line(0,-1)\halflength}%
    \advance\ypos by-\halflength
    \advance\ypos by-\gap
    \put(\xpos,\ypos){\evector(0,-1){\halflength}}%
\else\ifnum \arrowtype=-1
    \advance \ypos by-\arrowlength
    \put(\xpos,\ypos){\line(0,1){\halflength}}%
    \advance\ypos by\halflength
    \advance\ypos by\gap
    \put(\xpos,\ypos){\vector(0,1){\halflength}}%
\else\ifnum \arrowtype=-2
    \advance \ypos by-\arrowlength
    \put(\xpos,\ypos){\line(0,1)\halflength}%
    \put(\xpos,\ypos){\vector(0,1)3}%
    \advance\ypos by\halflength
    \advance\ypos by\gap
    \put(\xpos,\ypos){\vector(0,1){\halflength}}%
\else\ifnum\arrowtype=-3
    \advance \ypos by-\arrowlength
    \put(\xpos,\ypos){\line(0,1)\halflength}%
    \advance\ypos by\halflength
    \advance\ypos by\gap
    \put(\xpos,\ypos){\evector(0,1){\halflength}}%
\fi\fi\fi\fi\fi\fi
}

\def\putmorphism(#1)(#2,#3)[#4`#5`#6]#7#8#9{{%
\run #2
\rise #3
\ifnum\rise=0
  \puthmorphism(#1)[#4`#5`#6]{#7}{#8}{#9}%
\else\ifnum\run=0
  \putvmorphism(#1)[#4`#5`#6]{#7}{#8}{#9}%
\else
\setpos(#1)%
\arrowlength #7
\arrowtype #8
\ifnum\run=0
\else\ifnum\rise=0
\else
\ifnum\run>0
    \coefa=1
\else
   \coefa=-1
\fi
\ifnum\arrowtype>0
   \coefb=0
   \coefc=-1
\else
   \coefb=\coefa
   \coefc=1
   \arrowtype=-\arrowtype
\fi
\width=2
\multiply \width by\run
\divide \width by\rise
\ifnum \width<0  \width=-\width\fi
\advance\width by60
\if l#9 \width=-\width\fi
\putbox(\xpos,\ypos){#4}
{\multiply \coefa by\arrowlength
\advance\xpos by\coefa
\multiply \coefa by\rise
\divide \coefa by\run
\advance \ypos by\coefa
\putbox(\xpos,\ypos){#5} }%
{\multiply \coefa by\arrowlength
\divide \coefa by2
\advance \xpos by\coefa
\advance \xpos by\width
\multiply \coefa by\rise
\divide \coefa by\run
\advance \ypos by\coefa
\if l#9%
   \put(\xpos,\ypos){\makebox(0,0)[r]{$#6$}}%
\else\if r#9%
   \put(\xpos,\ypos){\makebox(0,0)[l]{$#6$}}%
\fi\fi }%
{\multiply \rise by-\coefc
\multiply \run by-\coefc
\multiply \coefb by\arrowlength
\advance \xpos by\coefb
\multiply \coefb by\rise
\divide \coefb by\run
\advance \ypos by\coefb
\multiply \coefc by70
\advance \ypos by\coefc
\multiply \coefc by\run
\divide \coefc by\rise
\advance \xpos by\coefc
\multiply \coefa by140
\multiply \coefa by\run
\divide \coefa by\rise
\advance \arrowlength by\coefa
\ifnum \arrowtype=1
   \put(\xpos,\ypos){\vector(\run,\rise){\arrowlength}}%
\else\ifnum\arrowtype=2
   \put(\xpos,\ypos){\mvector(\run,\rise){\arrowlength}}%
\else\ifnum\arrowtype=3
   \put(\xpos,\ypos){\evector(\run,\rise){\arrowlength}}%
\fi\fi\fi}\fi\fi\fi\fi}}

\def\puthmorphism(#1,#2)[#3`#4`#5]#6#7#8{{%
\xpos #1
\ypos #2
\width #6
\arrowlength #6
\putbox(\xpos,\ypos){#3\vphantom{#4}}%
{\advance \xpos by\arrowlength
\putbox(\xpos,\ypos){\vphantom{#3}#4}}%
\horsize{\tempcounta}{#3}%
\horsize{\tempcountb}{#4}%
\divide \tempcounta by2
\divide \tempcountb by2
\advance \tempcounta by30
\advance \tempcountb by30
\advance \xpos by\tempcounta
\advance \arrowlength by-\tempcounta
\advance \arrowlength by-\tempcountb
\putvector(\xpos,\ypos)(1,0){\arrowlength}{#7}%
\divide \arrowlength by2
\advance \xpos by\arrowlength
\vertsize{\tempcounta}{#5}%
\divide\tempcounta by2
\advance \tempcounta by20
\if a#8 %
   \advance \ypos by\tempcounta
   \putbox(\xpos,\ypos){#5}%
\else
   \advance \ypos by-\tempcounta
   \putbox(\xpos,\ypos){#5}%
\fi}}

\def\putvmorphism(#1,#2)[#3`#4`#5]#6#7#8{{%
\xpos #1
\ypos #2
\arrowlength #6
\arrowtype #7
\settowidth{\xlen}{$#5$}%
\putbox(\xpos,\ypos){#3}%
{\advance \ypos by-\arrowlength
\putbox(\xpos,\ypos){#4}}%
{\advance\arrowlength by-140
\advance \ypos by-70
\ifdim\xlen>0pt
   \if m#8%
      \putsplitvector(\xpos,\ypos){\arrowlength}{\arrowtype}%
   \else
      \putvector(\xpos,\ypos)(0,-1){\arrowlength}{\arrowtype}%
   \fi
\else
   \putvector(\xpos,\ypos)(0,-1){\arrowlength}{\arrowtype}%
\fi}%
\ifdim\xlen>0pt
   \divide \arrowlength by2
   \advance\ypos by-\arrowlength
   \if l#8%
      \advance \xpos by-40
      \put(\xpos,\ypos){\makebox(0,0)[r]{$#5$}}%
   \else\if r#8%
      \advance \xpos by40
      \put(\xpos,\ypos){\makebox(0,0)[l]{$#5$}}%
   \else
      \putbox(\xpos,\ypos){#5}%
   \fi\fi
\fi
}}

\def\topadjust[#1`#2`#3]{%
\yoff=10
\vertadjust[#1`#2`{#3}]%
\advance \yext by\tempcounta
\advance \yext by 10
}
\def\botadjust[#1`#2`#3]{%
\vertadjust[#1`#2`{#3}]%
\advance \yext by\tempcounta
\advance \yoff by-\tempcounta
}
\def\leftadjust[#1`#2`#3]{%
\xoff=0
\horadjust[#1`#2`{#3}]%
\advance \xext by\tempcounta
\advance \xoff by-\tempcounta
}
\def\rightadjust[#1`#2`#3]{%
\horadjust[#1`#2`{#3}]%
\advance \xext by\tempcounta
}
\def\rightsladjust[#1`#2`#3]{%
\sladjust[#1`#2`{#3}]{\width}%
\advance \xext by\tempcounta
}
\def\leftsladjust[#1`#2`#3]{%
\xoff=0
\sladjust[#1`#2`{#3}]{\width}%
\advance \xext by\tempcounta
\advance \xoff by-\tempcounta
}
\def\adjust[#1`#2;#3`#4;#5`#6;#7`#8]{%
\topadjust[#1``{#2}]
\leftadjust[#3``{#4}]
\rightadjust[#5``{#6}]
\botadjust[#7``{#8}]}

\def\putsquarep<#1>(#2)[#3;#4`#5`#6`#7]{{%
\setsqparms[#1]%
\setpos(#2)%
\settokens[#3]%
\puthmorphism(\xpos,\ypos)[\tokenc`\tokend`{#7}]{\width}{\arrowtyped}b%
\advance\ypos by \height
\puthmorphism(\xpos,\ypos)[\tokena`\tokenb`{#4}]{\width}{\arrowtypea}a%
\putvmorphism(\xpos,\ypos)[``{#5}]{\height}{\arrowtypeb}l%
\advance\xpos by \width
\putvmorphism(\xpos,\ypos)[``{#6}]{\height}{\arrowtypec}r%
}}

\def\putsquare{\@ifnextchar <{\putsquarep}{\putsquarep%
   <\arrowtypea`\arrowtypeb`\arrowtypec`\arrowtyped;\width`\height>}}
\def\square{\@ifnextchar< {\squarep}{\squarep
   <\arrowtypea`\arrowtypeb`\arrowtypec`\arrowtyped;\width`\height>}}
\def\squarep<#1>[#2`#3`#4`#5;#6`#7`#8`#9]{{
\setsqparms[#1]
\xext=\width                                          
\yext=\height                                         
\topadjust[#2`#3`{#6}]
\botadjust[#4`#5`{#9}]
\leftadjust[#2`#4`{#7}]
\rightadjust[#3`#5`{#8}]
\begin{picture}(\xext,\yext)(\xoff,\yoff)
\putsquarep<\arrowtypea`\arrowtypeb`\arrowtypec`\arrowtyped;\width`\height>%
(0,0)[#2`#3`#4`#5;#6`#7`#8`{#9}]%
\end{picture}%
}}

\def\putptrianglep<#1>(#2,#3)[#4`#5`#6;#7`#8`#9]{{%
\settriparms[#1]%
\xpos=#2 \ypos=#3
\advance\ypos by \height
\puthmorphism(\xpos,\ypos)[#4`#5`{#7}]{\height}{\arrowtypea}a%
\putvmorphism(\xpos,\ypos)[`#6`{#8}]{\height}{\arrowtypeb}l%
\advance\xpos by\height
\putmorphism(\xpos,\ypos)(-1,-1)[``{#9}]{\height}{\arrowtypec}r%
}}

\def\putptriangle{\@ifnextchar <{\putptrianglep}{\putptrianglep
   <\arrowtypea`\arrowtypeb`\arrowtypec;\height>}}
\def\ptriangle{\@ifnextchar <{\ptrianglep}{\ptrianglep
   <\arrowtypea`\arrowtypeb`\arrowtypec;\height>}}

\def\ptrianglep<#1>[#2`#3`#4;#5`#6`#7]{{
\settriparms[#1]%
\width=\height                         
\xext=\width                           
\yext=\width                           
\topadjust[#2`#3`{#5}]
\botadjust[#3``]
\leftadjust[#2`#4`{#6}]
\rightsladjust[#3`#4`{#7}]
\begin{picture}(\xext,\yext)(\xoff,\yoff)
\putptrianglep<\arrowtypea`\arrowtypeb`\arrowtypec;\height>%
(0,0)[#2`#3`#4;#5`#6`{#7}]%
\end{picture}%
}}

\def\putqtrianglep<#1>(#2,#3)[#4`#5`#6;#7`#8`#9]{{%
\settriparms[#1]%
\xpos=#2 \ypos=#3
\advance\ypos by\height
\puthmorphism(\xpos,\ypos)[#4`#5`{#7}]{\height}{\arrowtypea}a%
\putmorphism(\xpos,\ypos)(1,-1)[``{#8}]{\height}{\arrowtypeb}l%
\advance\xpos by\height
\putvmorphism(\xpos,\ypos)[`#6`{#9}]{\height}{\arrowtypec}r%
}}

\def\putqtriangle{\@ifnextchar <{\putqtrianglep}{\putqtrianglep
   <\arrowtypea`\arrowtypeb`\arrowtypec;\height>}}
\def\qtriangle{\@ifnextchar <{\qtrianglep}{\qtrianglep
   <\arrowtypea`\arrowtypeb`\arrowtypec;\height>}}

\def\qtrianglep<#1>[#2`#3`#4;#5`#6`#7]{{
\settriparms[#1]
\width=\height                         
\xext=\width                           
\yext=\height                          
\topadjust[#2`#3`{#5}]
\botadjust[#4``]
\leftsladjust[#2`#4`{#6}]
\rightadjust[#3`#4`{#7}]
\begin{picture}(\xext,\yext)(\xoff,\yoff)
\putqtrianglep<\arrowtypea`\arrowtypeb`\arrowtypec;\height>%
(0,0)[#2`#3`#4;#5`#6`{#7}]%
\end{picture}%
}}

\def\putdtrianglep<#1>(#2,#3)[#4`#5`#6;#7`#8`#9]{{%
\settriparms[#1]%
\xpos=#2 \ypos=#3
\puthmorphism(\xpos,\ypos)[#5`#6`{#9}]{\height}{\arrowtypec}b%
\advance\xpos by \height \advance\ypos by\height
\putmorphism(\xpos,\ypos)(-1,-1)[``{#7}]{\height}{\arrowtypea}l%
\putvmorphism(\xpos,\ypos)[#4``{#8}]{\height}{\arrowtypeb}r%
}}

\def\putdtriangle{\@ifnextchar <{\putdtrianglep}{\putdtrianglep
   <\arrowtypea`\arrowtypeb`\arrowtypec;\height>}}
\def\dtriangle{\@ifnextchar <{\dtrianglep}{\dtrianglep
   <\arrowtypea`\arrowtypeb`\arrowtypec;\height>}}

\def\dtrianglep<#1>[#2`#3`#4;#5`#6`#7]{{
\settriparms[#1]
\width=\height                         
\xext=\width                           
\yext=\height                          
\topadjust[#2``]
\botadjust[#3`#4`{#7}]
\leftsladjust[#3`#2`{#5}]
\rightadjust[#2`#4`{#6}]
\begin{picture}(\xext,\yext)(\xoff,\yoff)
\putdtrianglep<\arrowtypea`\arrowtypeb`\arrowtypec;\height>%
(0,0)[#2`#3`#4;#5`#6`{#7}]%
\end{picture}%
}}

\def\putbtrianglep<#1>(#2,#3)[#4`#5`#6;#7`#8`#9]{{%
\settriparms[#1]%
\xpos=#2 \ypos=#3
\puthmorphism(\xpos,\ypos)[#5`#6`{#9}]{\height}{\arrowtypec}b%
\advance\ypos by\height
\putmorphism(\xpos,\ypos)(1,-1)[``{#8}]{\height}{\arrowtypeb}r%
\putvmorphism(\xpos,\ypos)[#4``{#7}]{\height}{\arrowtypea}l%
}}

\def\putbtriangle{\@ifnextchar <{\putbtrianglep}{\putbtrianglep
   <\arrowtypea`\arrowtypeb`\arrowtypec;\height>}}
\def\btriangle{\@ifnextchar <{\btrianglep}{\btrianglep
   <\arrowtypea`\arrowtypeb`\arrowtypec;\height>}}

\def\btrianglep<#1>[#2`#3`#4;#5`#6`#7]{{
\settriparms[#1]
\width=\height                         
\xext=\width                           
\yext=\height                          
\topadjust[#2``]
\botadjust[#3`#4`{#7}]
\leftadjust[#2`#3`{#5}]
\rightsladjust[#4`#2`{#6}]
\begin{picture}(\xext,\yext)(\xoff,\yoff)
\putbtrianglep<\arrowtypea`\arrowtypeb`\arrowtypec;\height>%
(0,0)[#2`#3`#4;#5`#6`{#7}]%
\end{picture}%
}}

\def\putAtrianglep<#1>(#2,#3)[#4`#5`#6;#7`#8`#9]{{%
\settriparms[#1]%
\xpos=#2 \ypos=#3
{\multiply \height by2
\puthmorphism(\xpos,\ypos)[#5`#6`{#9}]{\height}{\arrowtypec}b}%
\advance\xpos by\height \advance\ypos by\height
\putmorphism(\xpos,\ypos)(-1,-1)[#4``{#7}]{\height}{\arrowtypea}l%
\putmorphism(\xpos,\ypos)(1,-1)[``{#8}]{\height}{\arrowtypeb}r%
}}

\def\putAtriangle{\@ifnextchar <{\putAtrianglep}{\putAtrianglep
   <\arrowtypea`\arrowtypeb`\arrowtypec;\height>}}
\def\Atriangle{\@ifnextchar <{\Atrianglep}{\Atrianglep
   <\arrowtypea`\arrowtypeb`\arrowtypec;\height>}}

\def\Atrianglep<#1>[#2`#3`#4;#5`#6`#7]{{
\settriparms[#1]
\width=\height                         
\xext=\width                           
\yext=\height                          
\topadjust[#2``]
\botadjust[#3`#4`{#7}]
\multiply \xext by2 
\leftsladjust[#3`#2`{#5}]
\rightsladjust[#4`#2`{#6}]
\begin{picture}(\xext,\yext)(\xoff,\yoff)%
\putAtrianglep<\arrowtypea`\arrowtypeb`\arrowtypec;\height>%
(0,0)[#2`#3`#4;#5`#6`{#7}]%
\end{picture}%
}}

\def\putAtrianglepairp<#1>(#2)[#3;#4`#5`#6`#7`#8]{{
\settripairparms[#1]%
\setpos(#2)%
\settokens[#3]%
\puthmorphism(\xpos,\ypos)[\tokenb`\tokenc`{#7}]{\height}{\arrowtyped}b%
\advance\xpos by\height
\advance\ypos by\height
\putmorphism(\xpos,\ypos)(-1,-1)[\tokena``{#4}]{\height}{\arrowtypea}l%
\putvmorphism(\xpos,\ypos)[``{#5}]{\height}{\arrowtypeb}m%
\putmorphism(\xpos,\ypos)(1,-1)[``{#6}]{\height}{\arrowtypec}r%
}}

\def\putAtrianglepair{\@ifnextchar <{\putAtrianglepairp}{\putAtrianglepairp%
   <\arrowtypea`\arrowtypeb`\arrowtypec`\arrowtyped`\arrowtypee;\height>}}
\def\Atrianglepair{\@ifnextchar <{\Atrianglepairp}{\Atrianglepairp%
   <\arrowtypea`\arrowtypeb`\arrowtypec`\arrowtyped`\arrowtypee;\height>}}

\def\Atrianglepairp<#1>[#2;#3`#4`#5`#6`#7]{{%
\settripairparms[#1]%
\settokens[#2]%
\width=\height
\xext=\width
\yext=\height
\topadjust[\tokena``]%
\vertadjust[\tokenb`\tokenc`{#6}]
\tempcountd=\tempcounta                       
\vertadjust[\tokenc`\tokend`{#7}]
\ifnum\tempcounta<\tempcountd                 
\tempcounta=\tempcountd\fi                    
\advance \yext by\tempcounta                  
\advance \yoff by-\tempcounta                 %
\multiply \xext by2 
\leftsladjust[\tokenb`\tokena`{#3}]
\rightsladjust[\tokend`\tokena`{#5}]%
\begin{picture}(\xext,\yext)(\xoff,\yoff)%
\putAtrianglepairp
<\arrowtypea`\arrowtypeb`\arrowtypec`\arrowtyped`\arrowtypee;\height>%
(0,0)[#2;#3`#4`#5`#6`{#7}]%
\end{picture}%
}}

\def\putVtrianglep<#1>(#2,#3)[#4`#5`#6;#7`#8`#9]{{%
\settriparms[#1]%
\xpos=#2 \ypos=#3
\advance\ypos by\height
{\multiply\height by2
\puthmorphism(\xpos,\ypos)[#4`#5`{#7}]{\height}{\arrowtypea}a}%
\putmorphism(\xpos,\ypos)(1,-1)[`#6`{#8}]{\height}{\arrowtypeb}l%
\advance\xpos by\height
\advance\xpos by\height
\putmorphism(\xpos,\ypos)(-1,-1)[``{#9}]{\height}{\arrowtypec}r%
}}

\def\putVtriangle{\@ifnextchar <{\putVtrianglep}{\putVtrianglep
   <\arrowtypea`\arrowtypeb`\arrowtypec;\height>}}
\def\Vtriangle{\@ifnextchar <{\Vtrianglep}{\Vtrianglep
   <\arrowtypea`\arrowtypeb`\arrowtypec;\height>}}

\def\Vtrianglep<#1>[#2`#3`#4;#5`#6`#7]{{
\settriparms[#1]
\width=\height                         
\xext=\width                           
\yext=\height                          
\topadjust[#2`#3`{#5}]
\botadjust[#4``]
\multiply \xext by2 
\leftsladjust[#2`#3`{#6}]
\rightsladjust[#3`#4`{#7}]
\begin{picture}(\xext,\yext)(\xoff,\yoff)%
\putVtrianglep<\arrowtypea`\arrowtypeb`\arrowtypec;\height>%
(0,0)[#2`#3`#4;#5`#6`{#7}]%
\end{picture}%
}}

\def\putVtrianglepairp<#1>(#2)[#3;#4`#5`#6`#7`#8]{{
\settripairparms[#1]%
\setpos(#2)%
\settokens[#3]%
\advance\ypos by\height
\putmorphism(\xpos,\ypos)(1,-1)[`\tokend`{#6}]{\height}{\arrowtypec}l%
\puthmorphism(\xpos,\ypos)[\tokena`\tokenb`{#4}]{\height}{\arrowtypea}a%
\advance\xpos by\height
\putvmorphism(\xpos,\ypos)[``{#7}]{\height}{\arrowtyped}m%
\advance\xpos by\height
\putmorphism(\xpos,\ypos)(-1,-1)[``{#8}]{\height}{\arrowtypee}r%
}}

\def\putVtrianglepair{\@ifnextchar <{\putVtrianglepairp}{\putVtrianglepairp%
    <\arrowtypea`\arrowtypeb`\arrowtypec`\arrowtyped`\arrowtypee;\height>}}
\def\Vtrianglepair{\@ifnextchar <{\Vtrianglepairp}{\Vtrianglepairp%
    <\arrowtypea`\arrowtypeb`\arrowtypec`\arrowtyped`\arrowtypee;\height>}}

\def\Vtrianglepairp<#1>[#2;#3`#4`#5`#6`#7]{{%
\settripairparms[#1]%
\settokens[#2]
\xext=\height                  
\width=\height                 
\yext=\height                  
\vertadjust[\tokena`\tokenb`{#4}]
\tempcountd=\tempcounta        
\vertadjust[\tokenb`\tokenc`{#5}]
\ifnum\tempcounta<\tempcountd%
\tempcounta=\tempcountd\fi
\advance \yext by\tempcounta
\botadjust[\tokend``]%
\multiply \xext by2
\leftsladjust[\tokena`\tokend`{#6}]%
\rightsladjust[\tokenc`\tokend`{#7}]%
\begin{picture}(\xext,\yext)(\xoff,\yoff)%
\putVtrianglepairp
<\arrowtypea`\arrowtypeb`\arrowtypec`\arrowtyped`\arrowtypee;\height>%
(0,0)[#2;#3`#4`#5`#6`{#7}]%
\end{picture}%
}}

\def\putCtrianglep<#1>(#2,#3)[#4`#5`#6;#7`#8`#9]{{%
\settriparms[#1]%
\xpos=#2 \ypos=#3
\advance\ypos by\height
\putmorphism(\xpos,\ypos)(1,-1)[``{#9}]{\height}{\arrowtypec}l%
\advance\xpos by\height
\advance\ypos by\height
\putmorphism(\xpos,\ypos)(-1,-1)[#4`#5`{#7}]{\height}{\arrowtypea}l%
{\multiply\height by 2
\putvmorphism(\xpos,\ypos)[`#6`{#8}]{\height}{\arrowtypeb}r}%
}}

\def\putCtriangle{\@ifnextchar <{\putCtrianglep}{\putCtrianglep
    <\arrowtypea`\arrowtypeb`\arrowtypec;\height>}}
\def\Ctriangle{\@ifnextchar <{\Ctrianglep}{\Ctrianglep
    <\arrowtypea`\arrowtypeb`\arrowtypec;\height>}}

\def\Ctrianglep<#1>[#2`#3`#4;#5`#6`#7]{{
\settriparms[#1]
\width=\height                          
\xext=\width                            
\yext=\height                           
\multiply \yext by2 
\topadjust[#2``]
\botadjust[#4``]
\sladjust[#3`#2`{#5}]{\width}
\tempcountd=\tempcounta                 
\sladjust[#3`#4`{#7}]{\width}
\ifnum \tempcounta<\tempcountd          
\tempcounta=\tempcountd\fi              
\advance \xext by\tempcounta            
\advance \xoff by-\tempcounta           %
\rightadjust[#2`#4`{#6}]
\begin{picture}(\xext,\yext)(\xoff,\yoff)%
\putCtrianglep<\arrowtypea`\arrowtypeb`\arrowtypec;\height>%
(0,0)[#2`#3`#4;#5`#6`{#7}]%
\end{picture}%
}}

\def\putDtrianglep<#1>(#2,#3)[#4`#5`#6;#7`#8`#9]{{%
\settriparms[#1]%
\xpos=#2 \ypos=#3
\advance\xpos by\height \advance\ypos by\height
\putmorphism(\xpos,\ypos)(-1,-1)[``{#9}]{\height}{\arrowtypec}r%
\advance\xpos by-\height \advance\ypos by\height
\putmorphism(\xpos,\ypos)(1,-1)[`#5`{#8}]{\height}{\arrowtypeb}r%
{\multiply\height by 2
\putvmorphism(\xpos,\ypos)[#4`#6`{#7}]{\height}{\arrowtypea}l}%
}}

\def\putDtriangle{\@ifnextchar <{\putDtrianglep}{\putDtrianglep
    <\arrowtypea`\arrowtypeb`\arrowtypec;\height>}}
\def\Dtriangle{\@ifnextchar <{\Dtrianglep}{\Dtrianglep
   <\arrowtypea`\arrowtypeb`\arrowtypec;\height>}}

\def\Dtrianglep<#1>[#2`#3`#4;#5`#6`#7]{{
\settriparms[#1]
\width=\height                         
\xext=\height                          
\yext=\height                          
\multiply \yext by2 
\topadjust[#2``]
\botadjust[#4``]
\leftadjust[#2`#4`{#5}]
\sladjust[#3`#2`{#5}]{\height}
\tempcountd=\tempcountd                
\sladjust[#3`#4`{#7}]{\height}
\ifnum \tempcounta<\tempcountd         
\tempcounta=\tempcountd\fi             
\advance \xext by\tempcounta           %
\begin{picture}(\xext,\yext)(\xoff,\yoff)
\putDtrianglep<\arrowtypea`\arrowtypeb`\arrowtypec;\height>%
(0,0)[#2`#3`#4;#5`#6`{#7}]%
\end{picture}%
}}

\def\setrecparms[#1`#2]{\width=#1 \height=#2}%
%

\def\recursep<#1`#2>[#3;#4`#5`#6`#7`#8]{{%
\width=#1 \height=#2
\settokens[#3]
\settowidth{\tempdimen}{$\tokena$}
\ifdim\tempdimen=0pt
  \savebox{\tempboxa}{\hbox{$\tokenb$}}%
  \savebox{\tempboxb}{\hbox{$\tokend$}}%
  \savebox{\tempboxc}{\hbox{$#6$}}%
\else
  \savebox{\tempboxa}{\hbox{$\hbox{$\tokena$}\times\hbox{$\tokenb$}$}}%
  \savebox{\tempboxb}{\hbox{$\hbox{$\tokena$}\times\hbox{$\tokend$}$}}%
  \savebox{\tempboxc}{\hbox{$\hbox{$\tokena$}\times\hbox{$#6$}$}}%
\fi
\ypos=\height
\divide\ypos by 2
\xpos=\ypos
\advance\xpos by \width
\xext=\xpos \yext=\height
\topadjust[#3`\usebox{\tempboxa}`{#4}]%
\botadjust[#5`\usebox{\tempboxb}`{#8}]%
\sladjust[\tokenc`\tokenb`{#5}]{\ypos}%
\tempcountd=\tempcounta
\sladjust[\tokenc`\tokend`{#5}]{\ypos}%
\ifnum \tempcounta<\tempcountd
\tempcounta=\tempcountd\fi
\advance \xext by\tempcounta
\advance \xoff by-\tempcounta
\rightadjust[\usebox{\tempboxa}`\usebox{\tempboxb}`\usebox{\tempboxc}]%
\bfig
\putCtrianglep<-1`1`1;\ypos>(0,0)[`\tokenc`;#5`#6`{#7}]%
\puthmorphism(\ypos,0)[\tokend`\usebox{\tempboxb}`{#8}]{\width}{-1}b%
\puthmorphism(\ypos,\height)[\tokenb`\usebox{\tempboxa}`{#4}]{\width}{-1}a%
\advance\ypos by \width
\putvmorphism(\ypos,\height)[``\usebox{\tempboxc}]{\height}1r%
\efig
}}

\def\recurse{\@ifnextchar <{\recursep}{\recursep<\width`\height>}}

\def\puttwohmorphisms(#1,#2)[#3`#4;#5`#6]#7#8#9{{%
%
\puthmorphism(#1,#2)[#3`#4`]{#7}0a
\ypos=#2
\advance\ypos by 20
\puthmorphism(#1,\ypos)[\phantom{#3}`\phantom{#4}`#5]{#7}{#8}a
\advance\ypos by -40
\puthmorphism(#1,\ypos)[\phantom{#3}`\phantom{#4}`#6]{#7}{#9}b
}}

\def\puttwovmorphisms(#1,#2)[#3`#4;#5`#6]#7#8#9{{%
%
%
%
\putvmorphism(#1,#2)[#3`#4`]{#7}0a
\xpos=#1
\advance\xpos by -20
\putvmorphism(\xpos,#2)[\phantom{#3}`\phantom{#4}`#5]{#7}{#8}l
\advance\xpos by 40
\putvmorphism(\xpos,#2)[\phantom{#3}`\phantom{#4}`#6]{#7}{#9}r
}}

\def\puthcoequalizer(#1)[#2`#3`#4;#5`#6`#7]#8#9{{%
%
\setpos(#1)%
\puttwohmorphisms(\xpos,\ypos)[#2`#3;#5`#6]{#8}11%
\advance\xpos by #8
\puthmorphism(\xpos,\ypos)[\phantom{#3}`#4`#7]{#8}1{#9}
}}

\def\putvcoequalizer(#1)[#2`#3`#4;#5`#6`#7]#8#9{{%
%
%
%
%
\setpos(#1)%
\puttwovmorphisms(\xpos,\ypos)[#2`#3;#5`#6]{#8}11%
\advance\ypos by -#8
\putvmorphism(\xpos,\ypos)[\phantom{#3}`#4`#7]{#8}1{#9}
}}

\def\putthreehmorphisms(#1)[#2`#3;#4`#5`#6]#7(#8)#9{{%
\setpos(#1) \settypes(#8)
\if a#9 %
     \vertsize{\tempcounta}{#5}%
     \vertsize{\tempcountb}{#6}%
     \ifnum \tempcounta<\tempcountb \tempcounta=\tempcountb \fi
\else
     \vertsize{\tempcounta}{#4}%
     \vertsize{\tempcountb}{#5}%
     \ifnum \tempcounta<\tempcountb \tempcounta=\tempcountb \fi
\fi
\advance \tempcounta by 60
\puthmorphism(\xpos,\ypos)[#2`#3`#5]{#7}{\arrowtypeb}{#9}
\advance\ypos by \tempcounta
\puthmorphism(\xpos,\ypos)[\phantom{#2}`\phantom{#3}`#4]{#7}{\arrowtypea}{#9}
\advance\ypos by -\tempcounta \advance\ypos by -\tempcounta
\puthmorphism(\xpos,\ypos)[\phantom{#2}`\phantom{#3}`#6]{#7}{\arrowtypec}{#9}
}}

\def\putarc(#1,#2)[#3`#4`#5]#6#7#8{{%
\xpos #1
\ypos #2
\width #6
\arrowlength #6
\putbox(\xpos,\ypos){#3\vphantom{#4}}%
{\advance \xpos by\arrowlength
\putbox(\xpos,\ypos){\vphantom{#3}#4}}%
\horsize{\tempcounta}{#3}%
\horsize{\tempcountb}{#4}%
\divide \tempcounta by2
\divide \tempcountb by2
\advance \tempcounta by30
\advance \tempcountb by30
\advance \xpos by\tempcounta
\advance \arrowlength by-\tempcounta
\advance \arrowlength by-\tempcountb
\halflength=\arrowlength \divide\halflength by 2
\divide\arrowlength by 5
\put(\xpos,\ypos){\bezier{\arrowlength}(0,0)(50,50)(\halflength,50)}
\ifnum #7=-1 \put(\xpos,\ypos){\vector(-3,-2)0} \fi
\advance\xpos by \halflength
\put(\xpos,\ypos){\xpos=\halflength \advance\xpos by -50
   \bezier{\arrowlength}(0,50)(\xpos,50)(\halflength,0)}
\ifnum #7=1 {\advance \xpos by
   \halflength \put(\xpos,\ypos){\vector(3,-2)0}} \fi
\advance\ypos by 50
\vertsize{\tempcounta}{#5}%
\divide\tempcounta by2
\advance \tempcounta by20
\if a#8 %
   \advance \ypos by\tempcounta
   \putbox(\xpos,\ypos){#5}%
\else
   \advance \ypos by-\tempcounta
   \putbox(\xpos,\ypos){#5}%
\fi
}}

\makeatother

\sloppy

\newcommand{\nl}{\hspace{2cm}\\ }

\def\pos{\Diamond}
\def\diam{{\tiny\Diamond}}

\def\lc{\lceil}
\def\rc{\rceil}
\def\lf{\lfloor}
\def\rf{\rfloor}
\def\lk{\langle}
\def\rk{\rangle}
\def\blk{\dot{\langle\!\!\langle}}
\def\brk{\dot{\rangle\!\!\rangle}}

\newcommand{\pa}{\parallel}
\newcommand{\lra}{\longrightarrow}
\newcommand{\hra}{\hookrightarrow}
\newcommand{\hla}{\hookleftarrow}
\newcommand{\ra}{\rightarrow}
\newcommand{\la}{\leftarrow}
\newcommand{\lla}{\longleftarrow}
\newcommand{\da}{\downarrow}
\newcommand{\ua}{\uparrow}
\newcommand{\dA}{\downarrow\!\!\!^\bullet}
\newcommand{\uA}{\uparrow\!\!\!_\bullet}
\newcommand{\Da}{\Downarrow}
\newcommand{\DA}{\Downarrow\!\!\!^\bullet}
\newcommand{\UA}{\Uparrow\!\!\!_\bullet}
\newcommand{\Ua}{\Uparrow}
\newcommand{\Lra}{\Longrightarrow}
\newcommand{\Ra}{\Rightarrow}
\newcommand{\Lla}{\Longleftarrow}
\newcommand{\La}{\Leftarrow}
\newcommand{\nperp}{\perp\!\!\!\!\!\setminus\;\;}
\newcommand{\pq}{\preceq}

\def\phi{\varphi}
\def\ve{\varepsilon}
\def\o{{\omega}}

\def\bA{{\bf A}}
\def\bM{{\bf M}}
\def\bN{{\bf N}}
\def\bC{{\bf C}}
\def\bI{{\bf I}}
\def\bL{{\bf L}}
\def\bT{{\bf T}}
\def\bS{{\bf S}}
\def\bD{{\bf D}}
\def\bB{{\bf B}}
\def\bW{{\bf W}}
\def\bP{{\bf P}}
\def\bX{{\bf X}}
\def\bY{{\bf Y}}
\def\ba{{\bf a}}
\def\bb{{\bf b}}
\def\bc{{\bf c}}
\def\bd{{\bf d}}
\def\bh{{\bf h}}
\def\bi{{\bf i}}
\def\bj{{\bf j}}
\def\bk{{\bf k}}
\def\bm{{\bf m}}
\def\bn{{\bf n}}
\def\bp{{\bf p}}
\def\bq{{\bf q}}
\def\be{{\bf e}}
\def\br{{\bf r}}
\def\bi{{\bf i}}
\def\bs{{\bf s}}
\def\bt{{\bf t}}
\def\jeden{{\bf 1}}
\def\dwa{{\bf 2}}
\def\trzy{{\bf 3}}

\def\cB{{\cal B}}
\def\cA{{\cal A}}
\def\cC{{\cal C}}
\def\cD{{\cal D}}
\def\cE{{\cal E}}
\def\cF{{\cal F}}
\def\cG{{\cal G}}
\def\cI{{\cal I}}
\def\cJ{{\cal J}}
\def\cK{{\cal K}}
\def\cL{{\cal L}}
\def\cN{{\cal N}}
\def\cM{{\cal M}}
\def\cO{{\cal O}}
\def\cP{{\cal P}}
\def\cQ{{\cal Q}}
\def\cR{{\cal R}}
\def\cS{{\cal S}}
\def\cT{{\cal T}}
\def\cU{{\cal U}}
\def\cV{{\cal V}}
\def\cW{{\cal W}}
\def\cX{{\cal X}}
\def\cY{{\cal Y}}

\def\bMonCat{{\bf MonCat}}
\def\Mlcv{{\bf Malcev}}
\def\cMlcv{{\bf coMalcev}}
\def\Mnd{{\bf Mnd}}
\def\Cmd{{\bf Cmd}}
\def\Mon{{\bf Mon}}
\def\BMon{{\bf BMon}}
\def\SMon{{\bf SMon}}
\def\id{{\bf id}}
\def\Id{{\bf Id}}
\def\bCat{{{\bf Cat}}}
\def\bCatrc{{{\bf Cat}_{rc}}}
\def\bMonCatrc{{{\bf MonCat}_{rc}}}
\def\oC{{{\omega}Cat}}

\pagenumbering{arabic} \setcounter{page}{1}

\title{\bf\Large The Formal Theory of Monoidal Monads}

\author{  Marek Zawadowski\\
}

\maketitle
\begin{abstract} We give a 3-categorical, purely formal argument
explaining why on the category of Kleisli algebras for a lax monoidal monad, and dually
on the category of Eilenberg-Moore algebras for an oplax monoidal monad, we always have
a natural monoidal structures. The key observation is that the 2-category of lax monoidal
monads in any 2-category D with finite products is isomorphic to the 2-category of monoidal
objects with oplax morphisms in the 2-category of monads with lax morphisms in D.
As we explain at the end of the paper a similar phenomenon occurs in many other situations.
\end{abstract}

\section{Introduction}
It is well known, c.f. \cite{Day} p. 30, that the category of Kleisli algebras for
a monoidal monad carries a monoidal structure. Dually, the category of Eilenberg-Moore algebras for an
opmonoidal monad carries a monoidal structure, as well.  Theorem 7.2 of \cite{Mo}, considerably improved
this result and then Theorem 2.9 of \cite{McC}  gives a still stronger formulation putting this
result into $2$-categorical context.  Theorem 2.9 of \cite{McC} says that the $2$-category of monoidal
categories, oplax morphisms, and monoidal natural transformations admits Eilenberg-Moore objects.
The main goal of this paper is to put those considerations into $3$-categorical context. We show that
in fact any $2$-category $\Mon_{op}(\cD)$ of monoidal objects, oplax $1$-morphisms, and monoidal $2$-cells
constructed in any $2$-category $\cD$ with finite products and admitting Eilenberg-Moore objects,
admits itself Eilenberg-Moore objects. As we are more interested in lax monoidal monads, we will be dealing
with them and Kleisli objects and we will be only pointing out what it implies in the dual case of oplax monoidal monads
and Eilenberg-Moore objects. The proof of the main Theorem \ref{Kleisli_moncat} is simple and purely formal
based on the observation, Lemma \ref{comm_structures}, that the $2$-categorical structures of monoidal objects
and of monads commutes, if taken with appropriate $1$-cells. The name `Formal Category Theory' for such kind of study was
suggested by S. MacLane. It was first developed in \cite{Gray} and later in many other places as in \cite{St} for monads.

The author's main motivations for this paper is the study of structures like signatures, signatures with amalgamations,
symmetric signatures, polynomial and analytic functors, c.f. \cite{Z}. Each of these structures carries a monoidal structure and here
we separate the case when it is simple and exists for a very general reason, due to the fact that the symmetrization monad on multisorted
signatures is not only monoidal but it also has some additional properties. This additional properties giving rise to a monoidal structure
on the category of Eilenberg-Moore algebras will be presented in another paper.

The paper is organized as follows.
For the sake of completeness, in Section 2, we describe in detail why the $2$-categorical definition of the Kleisli objects, c.f. \cite{St},
gives all the data we expect and that it agrees with the usual Kleisli category when considered in $2$-category of categories $\bCat$.
To appreciate the construction even more, we organize the data so constructed into various cells in $4$-category $\bf 3CAT$
of $3$-categories, $3$-functors, pseudo $3$-natural transformations, pseudo $3$-modifications, and perturbations.
In particular, we show how real life situations may lead to perturbations.
In Section 3, we spell the definition of a monoidal category in a $2$-category with finite products of $0$-cells. Moreover, we state key
technical result (Lemma \ref{comm_structures}), explaining in what sense the monoidal and the monad structures commute.
Using this fact, we prove, in Section 4,  Theorem \ref{Kleisli_moncat} concerning the existence of Kleisli objects in
$2$-categories of monoidal objects in $2$-categories with finite products. We also present this
result in an even more abstract form, Theorem \ref{Kleisli_moncat1}, as a certain lifting property. In Section 5, we state
these result in the dual case concerning oplax monoidal monads and Eilenberg-Moore objects. Finally, in Section 6, we show that
such results also holds, if we replace monoidal objects by braided or symmetric monoidal objects or even by either monads or comonads,
proviso we keep the 'laxness' of these structures opposite to the 'laxness' of the monads involved in the definition of either
the Kleisli or the Eilenberg-Moore objects.

I would like to thank Stanis\l aw Szawiel for the useful discussions.

\section{The Kleisli and Eilenberg-Moore objects}
The contents of this section is well known, possibly with some minor exception. We spell the definitions in detail as we will be refering to them later.

In this section  $\cD$ is an arbitrary $2$-category. Recall that a {\em monad} in $\cD$ consists of an object $\cC$ of $\cD$, a $1$-endocell $\cS: \cC\ra \cC$,
two $2$-cells $\eta:1_\cC\ra \cS$ and $\mu: \cS^2\ra \cS$ so that $\mu\circ\eta_\cS = 1_\cS=\mu\circ\cS(\eta)$ and $\mu\circ(\mu_\cS)=\mu\circ\cS(\mu)$.

\subsection{The Kleisli objects}

An {\em oplax morphism of monads} is a pair $(F,\tau): (\cC,\cS,\eta,\mu)\ra (\cC',\cS',\eta',\mu')$
such that $F:\cC\ra \cC'$ is a $1$-cell and $\tau : F\cS \ra \cS'F$ is a $2$-cell so that the diagram
\begin{center} \xext=1100 \yext=650
\begin{picture}(\xext,\yext)(\xoff,\yoff)
\setsqparms[-1`1`1`-1;600`400]
 \putsquare(470,50)[F\cS`F\cS^2`\cS'F`\cS'^2F;F(\mu)`\tau`\cS'(\tau)\circ\tau_{\cS}`\mu'_F]
 \put(120,300){\vector(2,1){250}}
  \put(120,200){\vector(2,-1){250}}
 \put(110,30){$\eta'_F$}
  \put(70,400){$F(\eta)$}
 \put(50,210){$F$}
 \end{picture}
\end{center}
commutes. The composition of two composable oplax morphisms of monads is given by $(F',\tau')\circ (F,\tau)=(F'\circ F, \tau'_F\circ F'(\tau))$.
A {\em transformation} $\sigma : (F,\tau)\ra (F',\tau')$ of two (parallel) oplax morphisms of monads is a $2$-cell $\sigma : F\ra F'$ making
the square
\begin{center} \xext=600 \yext=550
\begin{picture}(\xext,\yext)(\xoff,\yoff)
\setsqparms[1`1`1`1;600`400]
 \putsquare(0,50)[F\cS`F'\cS`\cS'F`\cS'F';\sigma_\cS`\tau`\tau'`S'(\sigma)]
 \end{picture}
\end{center}
commute. This defines the $2$-category $\Mnd_{op}(\cD)$ of monads in $\cD$ with oplax morphisms and transformations of oplax morphisms.
$\Mnd_{op}$ is a $3$-endofunctor on the $3$-category of $2$-categories $2\bCat$. On $1$- $2$- $3$-cells $\Mnd_{op}$ is defined in the obvious way.
We have an embedding $2$-functor $\iota_{op,\cD} : \cD \ra \Mnd_{op}(\cD)$ sending an object $\cC$ of $\cD$ to the identity monad on $\cC$. We often abbreviate $\iota_{op,\cD}$ to $\iota_{op}$.
$\iota_{op}$ has always a right $2$-adjoint $|-|=|-|_\cD$ sending a monad to its underlying category.
If  $\iota_{op}$ has a left $2$-adjoint $\cK=\cK_\cD$ we say, c.f. \cite{St}, that $\cD$ admits Kleisli objects.
\begin{center} \xext=1000 \yext=350
\begin{picture}(\xext,\yext)(\xoff,\yoff)
\putmorphism(0,150)(1,0)[\Mnd_{op}(\cD)`\cD`\iota_{op}]{1000}{-1}b

 \putmorphism(0,50)(1,0)[\phantom{\Mnd_{op}(\cD)}`\phantom{\cD}`{|-|}]{1000}{1}b
 \putmorphism(0,250)(1,0)[\phantom{\Mnd_{op}(\cD)}`\phantom{\cD}`{\cal K}]{1000}{1}a

 \end{picture}
\end{center}

If $H:\cD\ra \cD'$ is a $2$-functor between two $2$-categories that admit Kleisli objects, then we say that $H$ preserves Kleisli objects if
the canonical $2$-natural transformation in the square
\begin{center} \xext=1200 \yext=600
\begin{picture}(\xext,\yext)(\xoff,\yoff)
\setsqparms[1`1`1`1;1200`500]
 \putsquare(0,50)[\Mnd_{op}(\cD)`\Mnd_{op}(\cD')`\cD`\cD';\Mnd_{op}(H)`\cK_\cD`\cK_{\cD'}`H]
 \end{picture}
\end{center}
is a $2$-natural isomorphism.

\subsection{The Eilenberg-Moore objects}
A {\em lax morphism of monads} is a pair $(F,\tau): (\cC,\cS,\eta,\mu)\ra (\cC',\cS',\eta',\mu')$
such that $F:\cC\ra \cC'$ is a $1$-cell and $\tau :\cS'F  \ra F\cS$ is a $2$-cell so that the diagram
\begin{center} \xext=1100 \yext=550
\begin{picture}(\xext,\yext)(\xoff,\yoff)
\setsqparms[-1`1`1`-1;600`400]
 \putsquare(420,50)[\cS'F`\cS'^2F`F\cS`F\cS^2;\mu'_F`\tau`\tau_{\cS}\circ\cS'(\tau) `F(\mu)]
 \put(70,300){\vector(2,1){250}}
  \put(70,200){\vector(2,-1){250}}
 \put(60,30){$F(\eta)$}
  \put(20,400){$\eta'_F$}
 \put(0,210){$F$}
 \end{picture}
\end{center}
commutes. The composition of two composable lax morphisms of monads is given by $(F',\tau')\circ (F,\tau)=(F'\circ F, F'(\tau)\circ \tau'_F)$.
A {\em transformation} $\sigma : (F,\tau)\ra (F',\tau')$ of two (parallel) lax morphisms of monads is a $2$-cell $\sigma : F\ra F'$ making
the square
\begin{center} \xext=600 \yext=530
\begin{picture}(\xext,\yext)(\xoff,\yoff)
\setsqparms[1`1`1`1;600`400]
 \putsquare(0,30)[\cS'F`\cS'F'`F\cS`F'\cS;S'(\sigma)`\tau`\tau'`\sigma_\cS]
 \end{picture}
\end{center}
commute. This defines the $2$-category $\Mnd(\cD)$ of monads in $\cD$ with lax morphisms and transformations of lax morphisms.
$\Mnd$ is a $3$-endofunctor on the $3$-category of $2$-categories $2\bCat$.
We have an embedding $2$-functor $\iota_{\cD} : \cD \ra \Mnd(\cD)$  sending an object $\cC$ of $\cD$ to the identity monad on $\cC$.
We often abbreviate $\iota_{\cD}$ to $\iota$.
It has always a left $2$-adjoint $|-|=|-|_\cD$ sending a monad to its underlying category.
If  $\iota$ has a right $2$-adjoint $EM=EM_\cD$ we say, c.f. \cite{St}, that $\cD$ admits Eilenberg-Moore objects or EM objects.

\begin{center} \xext=1000 \yext=300
\begin{picture}(\xext,\yext)(\xoff,\yoff)
\putmorphism(0,150)(1,0)[\cD`\Mnd(\cD)`\iota]{1000}{1}b

 \putmorphism(0,50)(1,0)[\phantom{\cD}`\phantom{\Mnd(\cD)}`{EM}]{1000}{-1}b
 \putmorphism(0,250)(1,0)[\phantom{\cD}`\phantom{\Mnd(\cD)}`{|-|}]{1000}{-1}a
 \end{picture}
\end{center}
The preservation of EM objects is defined in the same way as the preservation of Kleisli objects.

\subsection{Some $3$-categories and $3$-functors}\label{some3cat}
$2\bCat$ is the $3$-category of $2$-categories, i.e. with $2$-categories as $0$-cells, $2$-functors as $1$-cells,
$2$-natural transformations as $2$-cells, and $2$-modifications as $3$-cells.

By a $2$-category with finite products, we will always mean a $2$-category with finite products of $0$-cells.
Let $2\bCat_\times$ be the sub-$3$-category of $2\bCat$ full on $2$-transformations and $2$-modifications,
whose $0$-cells are $2$-categories with finite products, and $1$-cells are $2$-functors  preserving
finite products.

Let $2\bCat_k$ be the sub-$3$-category of $2\bCat$ full on $2$-transformations and $2$-modifications,
whose $0$-cells are $2$-categories that admit Kleisli objects, and $1$-cells are $2$-functors  preserving
Kleisli objects.

Let $2\bCat_{em}$ be the sub-$3$-category of $2\bCat$ full on $2$-transformations and $2$-modifications,
whose $0$-cells are $2$-categories that admit EM objects, and $1$-cells are $2$-functors  preserving
EM objects.

These properties can be combined together. For example $2\bCat_{kem\times}$ is the
sub-$3$-category of $2\bCat$ full on $2$-transformations and $2$-modifications,
that admit all the mentioned constructions.

As we already mentioned, we have $3$-functors
\[  \Mnd, \Mnd_{op} : 2\bCat \lra 2\bCat \]
and these functors restrict to $3$-functors
\[  \Mnd_\times, \Mnd_{op,\times} : 2\bCat_\times \lra 2\bCat_\times \]
To see this, note that in the $2$-category $\cD$ with finite products,
the product of the monads $(C,\cS,\eta,\mu)$ and $(C',\cS',\eta',\mu')$ is,
the monad $(C\times C',\cS\times\cS',(\eta,\eta'),(\mu,\mu'))$.

\subsection{The $2$-categorical description of the Kleisli objects}
We describe below the above $3$-categorical definition of the Kleisli objects in $2$-categorical terms.

Thus we have $2$-adjunctions $\cK\dashv \iota_{op}\dashv |-|$. Let us fix a monad $(\cC,\cS,\eta,\mu)$ in $\cD$.
We will often abbreviate it to $\cS$. The unit of the adjunction $\iota_{op}\dashv |-|$ on $\cC$ is the identity
$1_\cC:\cC\ra |\iota_{op}(\cC)|$. The counit of this adjunction on $\cS$ is $(1_\cC,\eta): \iota_{op}|\cS| \ra \cS$.

The unit of the $2$-adjunction $\cK\dashv \iota_{op}$ on $\cS$ is the morphism adjoint to $1_{\cK(\cS)}$
\begin{center} \xext=1700 \yext=350
\begin{picture}(\xext,\yext)(\xoff,\yoff)
\putmorphism(250,250)(1,0)[\cK(\cS)`\cK(\cS)`1_{\cK(\cS)}]{1000}{1}a
\put(0,160){\line(1,0){1500}}
\put(1500,120){\makebox(300,100){$\dashv$}}

\putmorphism(250,70)(1,0)[\cS`\iota_{op}\cK(\cS)=1_{\cC_\cS}`(F_\cS,\kappa)]{1000}{1}b
\end{picture}
\end{center}
Thus $\cC_\cS$ is a $0$-cell in $\cD$, $F_\cS:\cC\ra \cC_\cS$ is a $1$-cell in $\cD$, and $\kappa: F_\cS\circ\cS\ra F_\cS$ is a $2$-cell in $\cD$ so that
in the diagram
\begin{center} \xext=1600 \yext=250
\begin{picture}(\xext,\yext)(\xoff,\yoff)
   \putmorphism(0,100)(1,0)[F_\cS\circ \cS^2`F_\cS\circ \cS`]{800}{0}a
   \putmorphism(0,50)(1,0)[\phantom{F_\cS\circ \cS^2}`\phantom{F_\cS\circ \cS}`F_\cS(\mu)]{800}{1}b
   \putmorphism(0,150)(1,0)[\phantom{F_\cS\circ \cS^2}`\phantom{F_\cS\circ \cS}`\kappa_{\cS}]{800}{1}a

  \putmorphism(800,100)(1,0)[\phantom{F_\cS\circ \cS}`F_\cS`]{800}{0}a
  \putmorphism(800,150)(1,0)[\phantom{F_\cS\circ \cS}`\phantom{F_\cS}`\kappa]{800}{1}a
  \putmorphism(800,50)(1,0)[\phantom{F_\cS\circ \cS}`\phantom{F_\cS}`F_\cS(\eta)]{800}{-1}b
\end{picture}
\end{center}
we have $\kappa \circ F_\cS=1_{F_\cS}$ and $\kappa\circ F_\cS(\mu)=\kappa\circ(\kappa_{F_\cS})$. In such circumstances we say that
$(F_\cS,\kappa)$ {\em subcoequalizes}  $\cS$. The counit of this adjunction on $\cC$ is $1_\cC : \cC=\cK\iota_{op}(\cC)\ra \cC$.

One can check directly that $(\cS,\mu):\cS\ra \iota_{op}(|\cS|)=1_\cC$ is an oplax  morphism of monads.
By adjunction
\begin{center} \xext=1700 \yext=350
\begin{picture}(\xext,\yext)(\xoff,\yoff)
\putmorphism(250,260)(1,0)[\cS`\iota_{op}|\cS|`(\cS,\mu)]{1000}{1}a
\put(0,170){\line(1,0){1500}}
\put(1500,110){\makebox(300,100){$\dashv$}}

\putmorphism(250,50)(1,0)[\cK(\cS)`|\cS|`U_\cS]{1000}{1}b
\end{picture}
\end{center}
we get the $1$-cell $U_\cS$.
Using twice the adjunction $\cK\dashv \iota_{op}$ we obtain
\begin{center} \xext=2600 \yext=600
\begin{picture}(\xext,\yext)(\xoff,\yoff)
\putmorphism(250,500)(1,0)[\cS`\iota_{op}\cK(\cS)`(F_\cS,\kappa)]{1000}{1}a
\putmorphism(1250,500)(1,0)[\phantom{\iota_{op}\cK(\cS)}`\iota_{op}|\cS|`\iota_{op}(U_\cS)]{1000}{1}a

\put(0,410){\line(1,0){2520}}
\put(2530,370){\makebox(300,100){$\dashv $}}

\putmorphism(250,250)(1,0)[\cK(\cS)`\cK(\cS)`1_{\cK(\cS)}]{1000}{1}a
\putmorphism(1250,250)(1,0)[\phantom{\cK(\cS)}`|\cS|`U_\cS]{1000}{1}a

\put(0,160){\line(1,0){2520}}
\put(2530,120){\makebox(300,100){$\dashv$}}

\putmorphism(750,0)(1,0)[\cS`\iota_{op}|\cS|`(\cS,\mu)]{1000}{1}a
\end{picture}
\end{center}
and by the uniqueness of adjoints, we get $(\cS,\mu)=\iota_{op}(U_\cS)\circ (F_\cS,\kappa)=(U_\cS F_\cS,U_\cS(\kappa))$.

The unit of the adjunction $F_\cS\dashv U_\cS$ in $\cD$ is $\eta$.
In order to define $\varepsilon$, the counit of this adjunction, we proceed as follows. First note that
we have equalities of oplax morphism of monads from $\cS$  to $\iota_{op}\cK(\cS)=1_{\cC_\cS}$:
$$(F_\cS\circ\cS,F_\cS(\mu))=\iota_{op}(|F_\cS,\kappa|)\circ (\cS,\mu)=\iota_{op}(F_\cS)\circ \iota_{op}(U_\cS)\circ (F_\cS,\kappa)$$
Note that the codomains of the morphisms are correct as $\iota_{op}|\iota_{op}\cK(\cS)| =\iota_{op}\cK(\cS)$.
The above morphism is parallel to $(\cS,\mu)$.
Since $\kappa\circ F_\cS(\mu)=\kappa\circ(\kappa_{F_\cS})$ it follows that
$$\kappa: (F_\cS\circ \cS, F_\cS(\mu)) \ra (F_\cS,\kappa)$$
is a transformation of oplax morphisms of monads, i.e. a $2$-cell in $\Mnd_{op}(\cD)$.
The adjoint correspondences of the $2$-cells below defines the counit $\varepsilon$:
\begin{center} \xext=2600 \yext=850
\begin{picture}(\xext,\yext)(\xoff,\yoff)
\putmorphism(250,770)(1,0)[\phantom{\cS}`\iota_{op}\cK(\cS)`(F_\cS,\kappa)]{800}{1}a
\putmorphism(1050,770)(1,0)[\phantom{\iota_{op}\cK(\cS)}`\iota_{op}|\cS|`\iota_{op}(U_\cS)]{800}{1}a
\putmorphism(1850,770)(1,0)[\phantom{\iota_{op}|\cS|}`\phantom{\iota_{op}\cK(\cS)}`\iota_{op}(F_\cS)]{800}{1}a
\putmorphism(250,550)(1,0)[\phantom{\cS}`\phantom{\iota_{op}\cK(\cS)}`(F_\cS,\kappa)]{2400}{1}b
\putmorphism(250,650)(1,0)[\cS`\iota_{op}\cK(\cS)`]{2400}{0}b

\put(1400,600){\makebox(300,100){$\kappa\,\Da$}}

\put(0,410){\line(1,0){3020}}
\put(2930,370){\makebox(300,100){$\dashv $}}

\putmorphism(650,250)(1,0)[\phantom{\cK(\cS)}`|\cS|`U_\cS]{800}{1}a
\putmorphism(1450,250)(1,0)[\phantom{|\cS|}`\phantom{\cK(\cS)}`F_\cS]{800}{1}a

\put(1600,100){\makebox(300,100){$\varepsilon\,\Da$}}

\putmorphism(650,50)(1,0)[\phantom{\cK(\cS)}`\phantom{\cK(\cS)}`1_{\cK(\cS)}]{1600}{1}b
\putmorphism(650,150)(1,0)[\cK(\cS)`\cK(\cS)`]{1600}{0}b
\end{picture}
\end{center}
 We note for the record that $\varepsilon_{F_\cS}=\kappa$. Next, we verify the triangular equalities.
We have
\[ \varepsilon_{F_\cS}\circ F_\cS(\eta)=\kappa\circ F_\cS(\eta)= 1_{F_\cS} \]
The last equality follows from the fact that $(F_\cS,\kappa):\cS\ra 1_{\cC_\cS}$ is an oplax morphism
monads, i.e. $(F_\cS,\kappa)$ subequalizes $\cS$.

To see the other triangular equality, we consider the following correspondences of $2$-cells

\begin{center} \xext=3000 \yext=3200
\begin{picture}(\xext,\yext)(\xoff,\yoff)
\putmorphism(300,2620)(1,0)[\cK(\cS)`|\cS|`U_\cS]{800}{1}a
\putmorphism(1100,2620)(1,0)[\phantom{\cS|}`\cK(\cS)`F_\cS]{800}{1}a
\putmorphism(1900,2620)(1,0)[\phantom{\cK(\cS)}`|\cS|`U_\cS]{800}{1}a

\put(1200,2400){\makebox(300,100){$\varepsilon\;\Da$}}
\put(1850,2760){\makebox(300,100){$\eta\;\Da$}}

\put(250,2350){\line(1,0){1650}}
\put(250,2350){\line(0,1){200}}
\put(1900,2350){\vector(0,1){200}}
\put(1100,2200){\makebox(100,100){$1_{\cK_\cS}$}}
\put(1100,2950){\line(1,0){1600}}
\put(1100,2700){\line(0,1){250}}
\put(2700,2950){\vector(0,-1){250}}
\put(2000,3000){\makebox(100,100){$1_{|\cS|}$}}

\put(0,2160){\line(1,0){3100}}
\put(3100,2120){\makebox(100,100){$\dashv $}}

\putmorphism(100,1620)(1,0)[\cS`\iota_{op}\cK(\cS)`(F_\cS,\kappa)]{700}{1}a
\putmorphism(800,1620)(1,0)[\phantom{\iota_{op}\cK(\cS)}`\iota_{op}|\cS|`\iota_{op}(U_\cS)]{700}{1}a
\putmorphism(1500,1620)(1,0)[\phantom{\iota_{op}|\cS|}`\iota_{op}\cK(\cS)`\iota_{op}(F_\cS)]{700}{1}a
\putmorphism(2200,1620)(1,0)[\phantom{\iota_{op}\cK(\cS)}`\iota_{op}|\cS|`\iota_{op}(U_\cS)]{700}{1}a

\put(1000,1400){\makebox(300,100){$\kappa=\varepsilon_{F_\cS}\;\Da$}}
\put(2050,1760){\makebox(300,100){$\iota_{op}(\eta)\;\Da$}}

\put(100,1350){\line(1,0){2100}}
\put(100,1350){\line(0,1){200}}
\put(2200,1350){\vector(0,1){200}}
\put(1100,1200){\makebox(100,100){$(F_\cS,\kappa)$}}
\put(1450,1950){\line(1,0){1400}}
\put(1450,1700){\line(0,1){250}}
\put(2850,1950){\vector(0,-1){250}}
\put(2000,2000){\makebox(100,100){$\iota_{op}(1_{|\cS|})$}}

\put(0,1110){\line(1,0){3100}}
\put(3100,1070){\makebox(100,100){$=$}}

\putmorphism(550,950)(1,0)[\phantom{\cK(\cS)}`\phantom{|\cS|}`U_\cS]{1800}{1}a
\putmorphism(550,750)(1,0)[\cK(\cS)`|\cS|`]{1800}{1}a
\putmorphism(550,550)(1,0)[\phantom{\cK(\cS)}`\phantom{|\cS|}`U_\cS]{1800}{1}b

\put(1000,800){\makebox(300,100){$\iota_{op}(\eta_\cS)\;\Da$}}
\put(1000,600){\makebox(300,100){$U_\cS(\kappa)\;\Da$}}
\put(1700,770){\makebox(300,100){$(\cS^2,\cS(\mu))$}}

\put(0,410){\line(1,0){3100}}
\put(3100,370){\makebox(100,100){$\dashv $}}

\putmorphism(550,250)(1,0)[\phantom{\cK(\cS)}`\phantom{|\cS|}`U_\cS]{1800}{1}a
\putmorphism(550,50)(1,0)[\phantom{\cK(\cS)}`\phantom{|\cS|}`U_\cS]{1800}{1}b
\putmorphism(550,150)(1,0)[\cK(\cS)`|\cS|`]{1800}{0}b

\put(1200,100){\makebox(300,100){$1_{U_\cS}\;\Da$}}

\end{picture}
\end{center}
The first and the last are adjoint correspondences. In the middle, we have equality of $2$-cells.
The last $2$-cell is $1_{U_\cS}$ since before last is
$$U_{\cS}(\varepsilon_{F_\cS})\circ \iota_{op}(\eta_\cS)= U_{\cS}(\kappa)\circ \eta_\cS=  \mu\circ \eta_\cS= 1_{(S,\mu)} $$
This ends the $2$-categorical explanation why $\cK$ 'produces' the Kleisli object, if they exist.
The categorical explanation will be given in Subsection \ref{cat_description}.

\subsection{The $4$-categorical perspective}

We bring here some order to the data constructed above by describing it as some cells in the $4$-category
$3\bCat$ of (strict) $3$-categories, $3$-functors, pseudo-natural 3-transformations, pseudo 3-modifications, and perturbations.

We need some notation to be used only in the remainder of this subsection.
For a monad $\cS=(\cC,\cS,\eta,\mu)$ in a $2$-category $\cD$ the unit $\eta$
(and all other constructs derived from the monad $\cS$) will be denoted with a subscript $[\cD,\cS]$. Thus we write $\cC_{[\cD,\cS]}$ for $\cC$,
$\eta_{[\cD,\cS]}$ for the unit $\eta$, $\varepsilon_{[\cD,\cS]}$ for the counit $\varepsilon$ of the adjunction $F_\cS\dashv U_\cS$, i.e.
$F_{[\cD,\cS]}\dashv U_{[\cD,\cS]}$, and so on.

We have a modification $U$:
\begin{center} \xext=1600 \yext=450
\begin{picture}(\xext,\yext)(\xoff,\yoff)
\putmorphism(0,350)(1,0)[\phantom{2\bCat_k}`\phantom{2\bCat}`\Mnd_{op}]{1600}{1}a
\putmorphism(0,200)(1,0)[2\bCat_k`2\bCat`]{1600}{0}a
\putmorphism(0,50)(1,0)[\phantom{2\bCat_k}`\phantom{2\bCat}`Emb]{1600}{1}b

\put(1000,100){\makebox(300,100){$\Da\; |-|$}}
\put(1020,180){\line(0,1){120}}
\put(1040,180){\line(0,1){120}}

\put(300,100){\makebox(300,100){$\cK\;\Da$}}
\put(505,180){\line(0,1){120}}
\put(525,180){\line(0,1){120}}

\put(820,100){\makebox(50,100){$>$}}
\put(650,140){\line(1,0){200}}
\put(650,150){\line(1,0){220}}
\put(650,160){\line(1,0){200}}
\put(670,190){\makebox(150,100){$U$}}

\end{picture}
\end{center}
The $3$-functor $\Mnd_{op}$ is defined above, $Emb$ is the obvious embedding $3$-functor.
$|-|: \Mnd_{op}\ra Emb$ is a (strict) $3$-transformation so that $|-|_\cD:\Mnd_{op}(\cD)\ra \cD$ is
associating to a monad $\cS$ in $\cD$, its underlying category $|\cS|_\cD=\cC_{[\cD,\cS]}$.
$\cK : \Mnd_{op}\ra Emb$ is a (pseudo) $3$-transformation so that $\cK_\cD:\Mnd_{op}(\cD)\ra \cD$ is associating to a monad $\cS$ in $\cD$
its Kleisli category $\cK_\cD(\cS)$. The component  $U_\cD :\cK_\cD \ra |-|_\cD$  of the modification $U :\cK \ra |-|$  at $\cD$ is a
$2$-transformation of $2$-functors such that at the monad $S$ it is $U_{[\cD,\cS]} :\cK_\cD(\cS) \ra |\cS|_\cD$, i.e. the forgetful $1$-cell in $\cD$
from the Kleisli object for $\cS$ to the underlying category of $\cS$.

We also have a modification $F$:
\begin{center} \xext=1700 \yext=450
\begin{picture}(\xext,\yext)(\xoff,\yoff)
\putmorphism(0,350)(1,0)[\phantom{2\bCat_k}`\phantom{2\bCat}`\Mnd_{op}]{1700}{1}a
\putmorphism(0,200)(1,0)[2\bCat_k`2\bCat`]{1700}{0}a
\putmorphism(0,50)(1,0)[\phantom{2\bCat_k}`\phantom{2\bCat}`\Mnd_{op}]{1700}{1}b

\put(1100,100){\makebox(300,100){$\Da\; \iota_{op}\circ\cK$}}
\put(1066,180){\line(0,1){120}}
\put(1086,180){\line(0,1){120}}

\put(300,100){\makebox(300,100){$id_{\Mnd_{op}}\;\Da$}}
\put(644,180){\line(0,1){120}}
\put(664,180){\line(0,1){120}}

\put(920,100){\makebox(50,100){$>$}}
\put(750,140){\line(1,0){200}}
\put(750,150){\line(1,0){220}}
\put(750,160){\line(1,0){200}}
\put(770,190){\makebox(150,100){$F$}}

\end{picture}
\end{center}
$\iota_{op}\circ\cK: \Mnd_{op}\ra \Mnd_{op}$ is a (pseudo) $3$-transformation so that $\iota_{op,\cD}\circ\cK_\cD:\Mnd_{op}(\cD)\ra \Mnd_{op}(\cD)$ is
associating to a monad $\cS$ in $\cD$ the identity monad on $\cK_\cD(\cS)$, i.e. $\iota_{op,\cD}\circ\cK_\cD(\cS)= 1_{\cK_\cD(\cS)}$.

The component  $F_\cD : Id_{\Mnd_{op}(\cD)} \ra \iota_{op,\cD}\circ\cK_\cD$  of the modification $F : id_{\Mnd_{op}} \ra \iota_{op}\circ\cK$  at $\cD$ is a
$2$-transformation of $2$-functors such that at the monad $S$ it is
$(F_{[\cD,\cS]},\kappa_{[\cD,\cS]}) :\cS \ra 1_{\cK_\cD(\cS)}$. In particular $F_\cS=F_{[\cD,\cS]}=|(F_{[\cD,\cS]},\kappa_{[\cD,\cS]})|$
is the free Kleisli algebra  $1$-cell in $\cD$ from the underlying category of $\cS$ to the Kleisli object for $\cS$.

Now if we compose the $3$-transformation $|-|$ with the $3$-modification $F$ we get a  $3$-modification
\[ |F| : |-| \lra |-|\circ \iota_{op}\circ \cK= \cK   \]
Thus we can compose the $3$-modifications $|F|$ and $U$ both ways.
The perturbation $\eta$ (i.e. a $4$-cell in the $4$-category $3\bCat$)
from $Id_{|-|}$ to $U\circ |F|$ is described below. The following diagram
\begin{center} \xext=1700 \yext=1050
\begin{picture}(\xext,\yext)(\xoff,\yoff)
\putmorphism(0,950)(1,0)[\phantom{2\bCat_k}`\phantom{2\bCat}`\Mnd_{op}]{1700}{1}a
\putmorphism(0,500)(1,0)[2\bCat_k`2\bCat`]{1700}{0}a
\putmorphism(0,50)(1,0)[\phantom{2\bCat_k}`\phantom{2\bCat}`Emb]{1700}{1}b

\put(400,100){\makebox(50,100){$\Da$}}
\put(415,190){\line(0,1){620}}
\put(435,190){\line(0,1){620}}
\put(250,600){\makebox(50,100){$|-|$}}

\put(1300,100){\makebox(50,100){$\Da$}}
\put(1315,190){\line(0,1){620}}
\put(1335,190){\line(0,1){620}}
\put(1420,600){\makebox(50,100){$|-|$}}

\put(1070,200){\makebox(50,100){$>$}}
\put(600,240){\line(1,0){500}}
\put(600,250){\line(1,0){520}}
\put(600,260){\line(1,0){500}}
\put(800,100){\makebox(150,100){$U\circ |F|$}}

\put(1070,600){\makebox(50,100){$>$}}
\put(600,640){\line(1,0){500}}
\put(600,650){\line(1,0){520}}
\put(600,660){\line(1,0){500}}
\put(800,690){\makebox(150,100){$Id_{|-|}$}}

\put(847,330){\makebox(50,100){$\bigvee$}}
\put(865,345){\line(0,1){220}}
\put(875,345){\line(0,1){220}}
\put(855,375){\line(0,1){190}}
\put(885,375){\line(0,1){190}}
\put(720,400){\makebox(50,100){$\eta$}}
\end{picture}
\end{center}
describes all the faces of $\eta$.
The component of the above diagram at a $2$-category (with Kleisli objects) $\cD$ is
\begin{center} \xext=900 \yext=1050
\begin{picture}(\xext,\yext)(\xoff,\yoff)
\putmorphism(400,950)(0,-1)[\Mnd_{op}(\cD)`\cD`]{950}{0}a
\putmorphism(0,950)(0,-1)[\phantom{\Mnd_{op}(\cD)}`\phantom{\Mnd_{op}(\cD)}`|-|_\cD]{950}{1}l
\putmorphism(800,950)(0,-1)[\phantom{\Mnd_{op}(\cD)}`\phantom{\Mnd_{op}(\cD)}`|-|_\cD]{950}{1}r

\put(620,300){\makebox(50,100){$>$}}
\put(150,340){\line(1,0){500}}
\put(150,360){\line(1,0){500}}
\put(350,200){\makebox(150,100){$U_\cD\circ |F_\cD|$}}

\put(620,600){\makebox(50,100){$>$}}
\put(150,640){\line(1,0){500}}
\put(150,660){\line(1,0){500}}
\put(350,690){\makebox(150,100){$Id_{|-|_\cD}$}}

\put(447,430){\makebox(50,100){$\bigvee$}}
\put(470,445){\line(0,1){150}}
\put(455,475){\line(0,1){120}}
\put(485,475){\line(0,1){120}}
\put(320,450){\makebox(50,100){$\eta_\cD$}}
\end{picture}
\end{center}
The component of the above diagram at a monad $\cS$ in $\cD$ is
\begin{center} \xext=1000 \yext=350
\begin{picture}(\xext,\yext)(\xoff,\yoff)
\putmorphism(0,250)(1,0)[\phantom{C_{[\cD,\cS]}}`\phantom{C_{[\cD,\cS]}}`1_{C_{[\cD,\cS]}}]{1000}{1}a
\putmorphism(0,150)(1,0)[C_{[\cD,\cS]}`C_{[\cD,\cS]}`]{1000}{0}a
\putmorphism(0,50)(1,0)[\phantom{C_{[\cD,\cS]}}`\phantom{C_{[\cD,\cS]}}`\cS]{1000}{1}b

\put(320,100){\makebox(300,100){$\eta_{[\cD,\cS]}\;\;\Da$}}
\end{picture}
\end{center}
This means that $\eta$ is the collection of all the units of all Kleisli adjunctions $F_\cS\dashv U_\cS$ of all the monads
$\cS$ in all the $2$-categories $\cD$ that admit Kleisli objects.

Similarly, $\varepsilon$, defined below, is a perturbation from  $|F|\circ U$ to $Id_\cK$.
\begin{center} \xext=1700 \yext=1050
\begin{picture}(\xext,\yext)(\xoff,\yoff)
\putmorphism(0,950)(1,0)[\phantom{2\bCat_k}`\phantom{2\bCat}`\Mnd_{op}]{1700}{1}a
\putmorphism(0,500)(1,0)[2\bCat_k`2\bCat`]{1700}{0}a
\putmorphism(0,50)(1,0)[\phantom{2\bCat_k}`\phantom{2\bCat}`Emb]{1700}{1}b

\put(400,100){\makebox(50,100){$\Da$}}
\put(415,190){\line(0,1){620}}
\put(435,190){\line(0,1){620}}
\put(300,600){\makebox(50,100){$\cK$}}

\put(1300,100){\makebox(50,100){$\Da$}}
\put(1315,190){\line(0,1){620}}
\put(1335,190){\line(0,1){620}}
\put(1390,600){\makebox(50,100){$\cK$}}

\put(1070,200){\makebox(50,100){$>$}}
\put(600,240){\line(1,0){500}}
\put(600,250){\line(1,0){520}}
\put(600,260){\line(1,0){500}}
\put(800,100){\makebox(150,100){$Id_{\cK}$}}

\put(1070,600){\makebox(50,100){$>$}}
\put(600,640){\line(1,0){500}}
\put(600,650){\line(1,0){520}}
\put(600,660){\line(1,0){500}}
\put(800,690){\makebox(150,100){$|F|\circ U$}}

\put(847,330){\makebox(50,100){$\bigvee$}}
\put(865,345){\line(0,1){220}}
\put(875,345){\line(0,1){220}}
\put(855,375){\line(0,1){190}}
\put(885,375){\line(0,1){190}}
\put(720,400){\makebox(50,100){$\varepsilon$}}
\end{picture}
\end{center}
The component of the above diagram at a $2$-category (with Kleisli objects) $\cD$ is
\begin{center} \xext=900 \yext=1050
\begin{picture}(\xext,\yext)(\xoff,\yoff)
\putmorphism(400,950)(0,-1)[\Mnd_{op}(\cD)`\cD`]{950}{0}a
\putmorphism(0,950)(0,-1)[\phantom{\Mnd_{op}(\cD)}`\phantom{\Mnd_{op}(\cD)}`\cK_\cD]{950}{1}l
\putmorphism(800,950)(0,-1)[\phantom{\Mnd_{op}(\cD)}`\phantom{\Mnd_{op}(\cD)}`\cK_\cD]{950}{1}r

\put(620,300){\makebox(50,100){$>$}}
\put(150,340){\line(1,0){500}}
\put(150,360){\line(1,0){500}}
\put(350,200){\makebox(150,100){$Id_{\cK_\cD}$}}

\put(620,600){\makebox(50,100){$>$}}
\put(150,640){\line(1,0){500}}
\put(150,660){\line(1,0){500}}
\put(350,690){\makebox(150,100){$|F_\cD|\circ U_\cD$}}

\put(447,430){\makebox(50,100){$\bigvee$}}
\put(470,445){\line(0,1){150}}
\put(455,475){\line(0,1){120}}
\put(485,475){\line(0,1){120}}
\put(320,450){\makebox(50,100){$\varepsilon_\cD$}}
\end{picture}
\end{center}

The component of the above diagram at a monad $\cS$ in $\cD$ is
\begin{center} \xext=1800 \yext=350
\begin{picture}(\xext,\yext)(\xoff,\yoff)
\putmorphism(0,250)(1,0)[\phantom{\cK_\cD(\cS)=(C_{[\cD,\cS]})_\cS}`\phantom{(C_{[\cD,\cS]})_\cS=\cK_\cD(\cS)}`F_{[\cD,\cS]}\circ U_{[\cD,\cS]}]{1800}{1}a
\putmorphism(0,150)(1,0)[\cK_\cD(\cS)=(C_{[\cD,\cS]})_\cS`(C_{[\cD,\cS]})_\cS=\cK_\cD(\cS)`]{1800}{0}a
\putmorphism(0,50)(1,0)[\phantom{\cK_\cD(\cS)=(C_{[\cD,\cS]})_\cS}`\phantom{(C_{[\cD,\cS]})_\cS=\cK_\cD(\cS)}`1_{(C_{[\cD,\cS]})_\cS}]{1800}{1}b

\put(720,100){\makebox(300,100){$\varepsilon_{[\cD,\cS]}\;\;\Da$}}
\end{picture}
\end{center}
This means that $\varepsilon$ is the collection of all the counits of all Kleisli adjunctions $F_\cS\dashv U_\cS$ of all the monads $\cS$ in all the $2$-categories $\cD$ that admit Kleisli objects. Needless to say that the perturbations $\eta$ and $\varepsilon$ satisfy the triangular equalities.

\subsection{The categorical description of the Kleisli objects}\label{cat_description}
If $\cD$ is $\bCat$ the $2$-category of categories, then the Kleisli objects coincide with the usual categories of Kleisli algebras.
For a monad $(\cC,\cS,\eta,\mu)$ the category $\cC_\cS$ has the same objects as $\cC$. A morphism in $f:A\ra B$ in $\cC_\cS$ is a morphism
in $f:A\ra \cS(A)$ with the usual identities, compositions, $U_\cS$, and $F_\cS$. The component at $X$ in $\cC$ of the
natural transformation $\kappa : F_\cS\circ \cS\ra F_\cS$ is $1_{\cS(X)}$.

If a $2$-category $\cD$ admits Kleisli objects we can ask whether the Kleisli $2$-functor $\cK :\Mnd_{op}(\cD)\ra \cD$ preserves limits
of a particular kind. We have

\begin{lemma}
 The Kleisli $2$-functor $\cK :\Mnd_{op}(\bCat)\ra \bCat$ preserves products of $0$-cells.
\end{lemma}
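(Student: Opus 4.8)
The plan is to verify that $\cK$ takes a product diagram in $\Mnd_{op}(\bCat)$ to a product diagram in $\bCat$ by a direct inspection of the explicit Kleisli construction recalled just above, combined with the description of products in $\Mnd_{op}(\cD)$ given in Subsection \ref{some3cat}. Concretely, let $\cS=(\cC,\cS,\eta,\mu)$ and $\cS'=(\cC',\cS',\eta',\mu')$ be two monads in $\bCat$, with product $\cS\times\cS'=(\cC\times\cC',\cS\times\cS',(\eta,\eta'),(\mu,\mu'))$. I would first write down $\cK(\cS\times\cS')$: its objects are pairs $(A,A')$ with $A\in\cC$, $A'\in\cC'$, and a morphism $(A,A')\ra(B,B')$ is a morphism $(A,A')\ra(\cS B,\cS'B')$ in $\cC\times\cC'$, i.e. a pair $(f,f')$ with $f:A\ra\cS B$ and $f':A'\ra\cS'B'$. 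On the nose this is the same data as an object, resp. a morphism, of $\cK(\cS)\times\cK(\cS')$. So the candidate comparison functor
\[
\langle\cK(\pi),\cK(\pi')\rangle:\ \cK(\cS\times\cS')\ \lra\ \cK(\cS)\times\cK(\cS')
\]
is the identity on objects; on morphisms I must check it is also the identity, which means checking that the two projections $\pi:\cS\times\cS'\ra\cS$ and $\pi':\cS\times\cS'\ra\cS'$, viewed as oplax morphisms of monads with structure $2$-cells the identities, induce on Kleisli objects exactly the two coordinate functors. This is where the functoriality of the Kleisli construction on oplax morphisms of monads is used: for an oplax morphism $(F,\tau):\cS_1\ra\cS_2$ the induced functor $\cK(F,\tau):\cK(\cS_1)\ra\cK(\cS_2)$ sends $f:A\ra\cS_1 B$ to $\tau_B\circ F(f):FA\ra\cS_2 FB$; plugging in $F=\pi$, $\tau=1$ gives precisely the first coordinate, and similarly for $\pi'$.

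Next I would check that composition and identities match up. Composition in $\cK(\cS\times\cS')$ of $(f,f'):(A,A')\ra(B,B')$ and $(g,g'):(B,B')\ra(C,C')$ is, by the Kleisli composition rule in $\bCat$, the pair whose first component is $(\mu_C)\circ\cS(g)\circ f$ and whose second is $(\mu'_{C'})\circ\cS'(g')\circ f'$ — and these are exactly the Kleisli compositions in $\cK(\cS)$ and $\cK(\cS')$ respectively, because the monad multiplication of $\cS\times\cS'$ is $(\mu,\mu')$. The identity on $(A,A')$ is $(\eta_A,\eta'_{A'})$, again componentwise the right thing. Hence $\langle\cK(\pi),\cK(\pi')\rangle$ is an isomorphism of categories (indeed an equality), so $\cK(\cS\times\cS')$ has the universal property of the product $\cK(\cS)\times\cK(\cS')$ in $\bCat$. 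The case of the empty product is trivial: the terminal monad is the identity monad on the terminal category $\jeden$, and $\cK$ applied to an identity monad returns its underlying category, which is $\jeden$.

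I do not expect a genuine obstacle here: everything is forced by the explicit formulas. The only point requiring a little care — and the place where a reader might want detail — is bookkeeping the structure $2$-cells of the product monad and of the projection morphisms, to be sure that the induced functors on Kleisli objects really are the literal coordinate projections rather than merely isomorphic to them; once that is pinned down, the verification of objects, morphisms, identities and composition is routine. Alternatively, and perhaps more cleanly, one can argue $2$-adjointly: $\cK\dashv\iota_{op}$, and $\iota_{op}:\bCat\ra\Mnd_{op}(\bCat)$ visibly preserves products (it sends $\cC$ to the identity monad on $\cC$, and the identity monad on $\cC\times\cC'$ is the product of the identity monads), so its left adjoint $\cK$ preserves products as soon as one knows $\Mnd_{op}(\bCat)$ has them — but since we want the concrete identification with the usual Kleisli category, I would present the explicit computation above.
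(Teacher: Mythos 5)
Your main argument is correct and is essentially the paper's own proof: the paper likewise reduces to binary products, notes that the product monad in $\Mnd_{op}(\bCat)$ is $(\cC\times\cC',\cS\times\cS',(\eta,\eta'),(\mu,\mu'))$, and exhibits the canonical comparison $(\cC\times\cC')_{\cS\times\cS'}\ra\cC_\cS\times\cC'_{\cS'}$ as an isomorphism (the paper characterizes it by the universal property of the Kleisli object, i.e.\ by $H\circ F_{\cS\times\cS'}=F_\cS\times F_{\cS'}$ and $H(\kappa^{\cS\times\cS'})=(\kappa^\cS,\kappa^{\cS'})$, while you identify it with $\langle\cK(\pi),\cK(\pi')\rangle$ and check it is literally the identity on the explicit Kleisli data --- the same functor either way). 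One caveat on your closing aside: the ``cleaner'' adjoint argument is not valid as stated, since $\cK$ is a \emph{left} adjoint and the fact that its right adjoint $\iota_{op}$ preserves products implies nothing about $\cK$ preserving them (right adjoints preserve limits, left adjoints preserve colimits); this is harmless here only because you rightly rest the proof on the explicit computation.
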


\begin{proof} We will sketch the construction for binary products. Let $(\cC,\cS,\eta,\mu)$ and $(\cC',\cS',\eta',\mu')$ in $\bCat$.
Then their product in $\Mnd_{op}(\bCat)$ is $(\cC\times \cC',\cS\times\cS',(\eta,\eta'),(\mu,\mu'))$. One can easily verify that the unique morphism
$$H: (\cC\times \cC')_{\cS\times\cS'} \lra \cC_\cS\times \cC'_{\cS'}$$
such that $H\circ F_{\cS\times\cS'}=F_\cS\times F_{\cS'}$ and $H(\kappa^{\cS\times \cS'})= (\kappa^{\cS},\kappa^{\cS'})$ is an isomorphism.
\end{proof}  
\vskip 2mm
{\em Remark.} Note that, as the $2$-functor $EM :\Mnd(\bCat)\ra \bCat$ is a right $2$-adjoint it preserves all limits.

\subsection{The standard Kleisli and EM objects}
Suppose that we have a $2$-functor $G:\cD\ra \cE$ between two $2$-categories that admit Kleisli objects. Thus, we can form a diagram
\begin{center} \xext=1000 \yext=950
\begin{picture}(\xext,\yext)(\xoff,\yoff)
\setsqparms[-1`1`1`-1;1000`600]
 \putsquare(0,150)[\Mnd_{op}(\cD)`\cD`\Mnd_{op}(\cE)`\cE;\iota_{op}`\Mnd_{op}(G)`G`\iota_{op}]

 \putmorphism(0,50)(1,0)[\phantom{\Mnd_{op}(\cE)}`\phantom{\cE}`{|-|}]{1000}{1}b
 \putmorphism(0,250)(1,0)[\phantom{\Mnd_{op}(\cE)}`\phantom{\cE}`{\cal K}]{1000}{1}a

 \putmorphism(0,680)(1,0)[\phantom{\Mnd_{op}(\cD)}`\phantom{\cD}`{|-|}]{1000}{1}b
 \putmorphism(0,880)(1,0)[\phantom{\Mnd_{op}(\cD)}`\phantom{\cD}`{\cal K}]{1000}{1}a
 \end{picture}
\end{center}
so that the squares with $\iota_{op}$'s and $|-|$'s that commute. If it happen that the square with $\cK$'s
commute up to the canonical isomorphism, we say that $\cD$ {\em has standard Kleisli objects with respect to} $G$,
c.f. \cite{McC}. If $G$ is understood then we say that $\cD$ {\em has standard Kleisli objects with respect to} $\cE$.
The standard Kleisli objects with respect to $\bCat$ (and an obvious forgetful functor) will be called  {\em standard Kleisli objects}.
The standard EM objects are defined in a similar way.
\section{Monoidal objects in $2$-categories}

Let $\cD$ be a $2$-category with finite products of $0$-cells. In such a $2$-category $\cD$, we can talk about monoidal objects, (op)lax monoidal $1$-cells, and monoidal $2$-cells, as we talk about monoidal categories, (op)lax monoidal functors, and monoidal natural transformations in the $2$-category $\bCat$.
A {\em monoidal object in} $\cD$ consists of a $0$-cell $\cC$, two $1$-cells $\otimes : \cC\times \cC\lra \cC$, $I:1\ra \cC$, and three invertible $2$-cells
\[ \alpha: \otimes\circ (1\times \otimes) \Ra \otimes\circ (\otimes \times 1)\hskip 1cm \lambda: \otimes \circ \lk I, 1_\cC\rk \Ra 1_\cC\hskip 1cm \rho: \otimes \circ \lk 1_\cC , I\rk \Ra 1_\cC\]
making the pentagon
\[ \otimes \lk \alpha,1\rk \circ \alpha_{1_\cC\times\otimes\times 1_\cC}\circ \otimes\lk 1,\alpha \rk
= \alpha_{\otimes \times 1_\cC\times 1_\cC}\circ \alpha_{1_\cC\times 1_\cC\times\otimes} \]
and the triangle
\[ \otimes \lk \varrho_{\pi_1},1_{\pi_2} \rk \circ \alpha_{\lk \pi_1,I,\pi_2 \rk} = \otimes \lk 1_{\pi_1},\lambda_{\pi_2} \rk \]
commute, where $\lk \pi_1,I,\pi_2 \rk : \cC\times \cC\lra \cC\times \cC\times \cC$ is the obvious morphism.

A {\em lax monoidal morphism of monoidal objects}
$$(F,\varphi,\bar{\varphi}): (\cC,\otimes,I,\alpha,\lambda,\varrho) \lra (\cC',\otimes',I',\alpha',\lambda',\varrho')$$
consists of a $1$-cell and two $2$-cells
$$F: \cC\ra \cC',  \hskip 1cm \bar{\varphi}:I' \Ra F\circ I,  \hskip 1cm \varphi: \otimes'\circ (F\times F)\Ra F \circ\otimes$$
such that the following three diagrams
\begin{center} \xext=2300 \yext=1400
\begin{picture}(\xext,\yext)(\xoff,\yoff)
\setsqparms[1`1`1`0;2000`600]
 \putsquare(0,650)[\otimes'\circ (1\times \otimes')\circ (F\times F\times F)`
 \otimes'\circ (\otimes'\times 1)\circ (F\times F\times F)`
  \otimes'\circ (F\times F)\circ(1\times \otimes)`
 \otimes'\circ (F\times F)\circ(\otimes\times 1);
 \alpha'_{F\times F\times F}`
 \otimes'( 1,\varphi)`
 \otimes'( \varphi,1)`]

 \setsqparms[0`1`1`1;2000`600]
 \putsquare(0,50)[\phantom{ \otimes'\circ (F\times F)\circ(1\times \otimes)}`
 \phantom{\otimes'\circ (F\times F)\circ(\otimes\times 1)}`
 F\circ \otimes\circ(1\times \otimes)`
  F\circ \otimes\circ(\otimes\times 1);` \varphi_{(1\times \otimes)}`
 \varphi_{(\otimes\times 1)}`
F(\alpha)]
 \end{picture}
\end{center}
\begin{center} \xext=1500 \yext=750
\begin{picture}(\xext,\yext)(\xoff,\yoff)
\setsqparms[1`1`-1`1;1500`600]
 \putsquare(0,50)[\otimes'\circ \lk 1_\cC,I'\rk\circ F`F`
  \otimes'\circ (F\times F)\circ\lk 1_\cC,I\rk`
 F\circ\otimes\circ \lk 1_\cC,I\rk;
 \rho'_F`
 \otimes'( 1,\bar{\varphi})`
 F(\rho)`
 \varphi_{\lk 1_\cC,I\rk}]
 \end{picture}
\end{center}
and
\begin{center} \xext=1500 \yext=750
\begin{picture}(\xext,\yext)(\xoff,\yoff)
\setsqparms[1`1`-1`1;1500`600]
 \putsquare(0,50)[\otimes'\circ \lk I',1_\cC\rk\circ F`F`
  \otimes'\circ (F\times F)\circ\lk I, 1_\cC\rk`
 F\circ\otimes\circ \lk I,1_\cC\rk;
 \lambda'_F`
 \otimes'( \bar{\varphi},1)`
 F(\lambda)`
 \varphi_{\lk I,1_\cC\rk}]
 \end{picture}
\end{center}
commute.

An {\em oplax monoidal morphism of monoidal objects}
$$(F,\varphi,\bar{\varphi}): (\cC,\otimes,I,\alpha,\lambda,\varrho) \lra (\cC',\otimes',I',\alpha',\lambda',\varrho')$$
consists of a $1$-cell
and two $2$-cells
$$F: \cC\ra \cC', \hskip 1cm\bar{\varphi}:F\circ I \Ra I', \hskip 1cm \varphi:  F \circ\otimes \Ra \otimes'\circ (F\times F)$$
(note the change of direction!) satisfying similar diagrams as those for lax monoidal morphism.

A {\em transformation of lax monoidal morphism}
$$\tau : (F,\varphi,\bar{\varphi})\Ra (F',\varphi',\bar{\varphi}')$$
is a $2$-cell $\tau : F\ra F'$ such that the diagrams
\begin{center} \xext=2700 \yext=650
\begin{picture}(\xext,\yext)(\xoff,\yoff)
\setsqparms[1`1`-1`1;1200`500]
 \putsquare(0,50)[\otimes'\circ (F\times F)`\otimes'\circ (F'\times F')`
  F\circ \otimes` F'\circ \otimes;
 \otimes'(\sigma,\sigma)`
 \varphi`
 \varphi'`
 \sigma_\otimes]

 \putmorphism(2490,580)(0,-1)[F\circ I`F'\circ I`\sigma_I]{460}{1}r
\put(1850,400){\vector(3,1){500}}
\put(1850,300){\vector(3,-1){500}}
\put(1700,300){\makebox(150,100){$I'$}}

\put(1950,500){\makebox(150,100){$\bar{\varphi}$}}
\put(1950,100){\makebox(150,100){$\bar{\varphi}'$}}
 \end{picture}
\end{center}
commute. The transformations of oplax monoidal morphism are defined similarly.

Recall from \ref{some3cat} that $2\bCat_\times$ is the $3$-category of $2$-categories with finite products. We have $3$-functors
\[ \Mon,\Mon_{op} : 2\bCat_\times \lra 2\bCat_\times \]
$\Mon$ ($\Mon_{op}$) sends a $2$-category $\cD$ with finite products to the $2$-category $\Mon(\cD)$ ($\Mon_{op}(\cD)$)  of monoidal objects,
(op)lax  monoidal morphism, and their transformations.  We also have $3$-transformations
\[ \cU:\Mon\Ra \Id, \hskip 2cm \cU_{op}:\Mon_{op}\Ra \Id \]
whose components are forgetful functors forgetting the monoidal structure. $\Id$ is the identity functor on $2\bCat_\times$.

The following theorem says that, in any $2$-category $\cD$ with finite products, monoidal monads  are 'the same things' as monoidal categories
in the $2$-category of monads over $\cD$. However there are subtleties concerning (op)laxness of  $1$-cells.

\begin{lemma}\label{comm_structures}
The following diagrams of $3$-functors
\begin{center} \xext=2600 \yext=500
\begin{picture}(\xext,\yext)(\xoff,\yoff)
\setsqparms[1`1`1`1;800`400]
 \putsquare(0,0)[2\bCat_\times `2\bCat_\times `2\bCat_\times `2\bCat_\times ;\Mon`\Mnd_{op}`\Mnd_{op}`\Mon]
  \putsquare(1800,0)[2\bCat_\times `2\bCat_\times `2\bCat_\times `2\bCat_\times ;\Mon_{op}`\Mnd`\Mnd`\Mon_{op}]
 \end{picture}
\end{center}
commute up to natural $3$-isomorphisms $\xi$ and $\xi'$, respectively. Moreover, these isomorphisms are compatible with $3$-transformation
$\iota$ and $\cU$ in the sense that the diagrams of $3$-transformations
\begin{center} \xext=2200 \yext700
\begin{picture}(\xext,\yext)(\xoff,\yoff)
\putmorphism(1100,650)(0,-1)[\Mnd_{op}\Mon`\Mon\Mnd_{op}`\xi]{600}{1}r
\put(200,400){\vector(3,1){600}}
\put(200,300){\vector(3,-1){600}}
\put(0,300){\makebox(150,100){$\Mon$}}

\put(220,530){\makebox(150,100){$(\iota_{op})_\Mon$}}
\put(220,70){\makebox(150,100){$\Mon(\iota_{op})$}}

\put(1380,600){\vector(3,-1){600}}
\put(1380,100){\vector(3,1){600}}
\put(2100,300){\makebox(150,100){$\Mnd_{op}$}}

\put(1750,520){\makebox(150,100){$\Mnd_{op}(\cU)$}}
\put(1700,50){\makebox(150,100){$\cU_{\Mnd_{op}}$}}
\end{picture}
\end{center}

\begin{center} \xext=2200 \yext750
\begin{picture}(\xext,\yext)(\xoff,\yoff)
\putmorphism(1100,650)(0,-1)[\Mnd\Mon_{op}`\Mon_{op}\Mnd`\xi']{600}{1}r
\put(250,400){\vector(3,1){600}}
\put(250,300){\vector(3,-1){600}}
\put(0,300){\makebox(150,100){$\Mon_{op}$}}

\put(270,530){\makebox(150,100){$(\iota)_{\Mon_{op}}$}}
\put(270,70){\makebox(150,100){$\Mon_{op}(\iota)$}}

\put(1430,600){\vector(3,-1){600}}
\put(1430,100){\vector(3,1){600}}
\put(2100,300){\makebox(150,100){$\Mnd$}}

\put(1800,520){\makebox(150,100){$\Mnd(\cU_{op})$}}
\put(1800,50){\makebox(150,100){$(\cU_{op})_\Mnd$}}

\end{picture}
\end{center}
commute.
\end{lemma}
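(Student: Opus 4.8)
The plan is to prove that the two composite $3$-functors literally agree on underlying data, so that $\xi$ and $\xi'$ are essentially identities, and then to read the stated compatibilities off the same unwinding. Conceptually, a $0$-cell of $\Mnd_{op}(\Mon(\cD))$ is a ``lax monoidal monad'' in $\cD$, and one checks that this is exactly the same information as a monoidal object in $\Mnd_{op}(\cD)$, the opposite variances on the two sides being precisely what makes the bookkeeping match; for $\cD=\bCat$ this recovers the classical fact that a monoidal monad is a monoid in the $2$-category of monoidal categories and lax monoidal functors.

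First I would fix a $2$-category $\cD$ with finite products and unwind both sides on $0$-cells. A $0$-cell of $\Mnd_{op}(\Mon(\cD))$ is a monad in $\Mon(\cD)$: a monoidal object $(\cC,\otimes,I,\alpha,\lambda,\varrho)$, a lax monoidal endomorphism $(\cS,\varphi,\bar{\varphi})$ of it, and transformations of lax monoidal morphisms $\eta:1_\cC\Ra\cS$ and $\mu:\cS^2\Ra\cS$ satisfying the monad equations. A $0$-cell of $\Mon(\Mnd_{op}(\cD))$ is a monoidal object in $\Mnd_{op}(\cD)$; since by \ref{some3cat} the product in $\Mnd_{op}(\cD)$ of $(\cC,\cS,\eta,\mu)$ with itself is $(\cC\times\cC,\cS\times\cS,(\eta,\eta),(\mu,\mu))$, this unwinds to an oplax morphism of monads $(\otimes,\varphi):(\cC\times\cC,\cS\times\cS)\ra(\cC,\cS)$, an oplax morphism of monads $(I,\bar{\varphi}):1\ra(\cC,\cS)$, and invertible transformations of oplax monad morphisms $\alpha,\lambda,\varrho$ satisfying the pentagon and the triangle. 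Term by term both are given by exactly the $\cD$-data $(\cC,\otimes,I,\alpha,\lambda,\varrho;\cS,\varphi,\bar{\varphi};\eta,\mu)$, and the two lists of axioms coincide: the monad equations for $(\cS,\eta,\mu)$ and the pentagon/triangle for $(\alpha,\lambda,\varrho)$ occur verbatim on both sides; the assertion that $(\otimes,\varphi)$ and $(I,\bar{\varphi})$ are oplax monad morphisms is exactly the assertion that $\eta$ and $\mu$ are transformations of the lax monoidal morphisms $(1_\cC,\mathrm{id},\mathrm{id})\Ra(\cS,\varphi,\bar{\varphi})$ and $(\cS^2,\ldots)\Ra(\cS,\varphi,\bar{\varphi})$ (the unit triangle of an oplax monad morphism becomes the $\varphi$- and $\bar{\varphi}$-compatibility of $\eta$ resp.\ $\mu$, and the multiplication square becomes the remaining one); and the assertion that $\alpha,\lambda,\varrho$ are transformations of oplax monad morphisms is exactly the three coherence diagrams making $(\cS,\varphi,\bar{\varphi})$ lax monoidal. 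The same dictionary applies to $1$-cells — each unwinds to a $1$-cell $F$ of $\cD$ together with a monad-morphism $2$-cell $F\cS\Ra\cS'F$ and two lax-monoidal comparison $2$-cells, subject to the same axioms on both sides — and to $2$-cells, where a transformation on either side is a single $2$-cell of $\cD$ subject to the same two equations. Hence $\xi_\cD:\Mnd_{op}(\Mon(\cD))\ra\Mon(\Mnd_{op}(\cD))$ is an isomorphism of $2$-categories which is the identity on all underlying $\cD$-data.

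Next I would promote $\xi$ to a natural $3$-isomorphism. For a product-preserving $2$-functor $H:\cD\ra\cD'$ both $\Mnd_{op}(\Mon(H))$ and $\Mon(\Mnd_{op}(H))$ act by transporting $\cD$-data along $H$, so the naturality square for $\xi$ commutes on underlying data; the residual discrepancy lies only in the choice of products of $0$-cells used to form $\Mon(\Mnd_{op}(-))$, and it is mediated by the canonical product-comparison isomorphisms — this is precisely why the lemma claims commutativity only up to isomorphism rather than strict equality. Naturality in the $2$-cells and $3$-cells of $2\bCat_\times$ (that is, with respect to $2$-transformations and $2$-modifications) is then automatic, since $\xi$ is the identity on data. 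The dual statement, with $\Mon_{op}$, $\Mnd$ and oplax monoidal morphisms replacing lax ones, follows by running the same argument after reversing all $2$-cells, producing $\xi'$.

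Finally I would verify the compatibility triangles by evaluating at $\cD$ and on a monoidal object. The component of $(\iota_{op})_{\Mon}$ equips a monoidal object with the identity monad, while the component of $\Mon(\iota_{op})$ applies $\iota_{op,\cD}$ to each structure cell; under $\xi_\cD$ both yield the datum with $\cS=1_\cC$, $\eta=\mu=\mathrm{id}$, $\varphi=\bar{\varphi}=\mathrm{id}$ and the original $(\otimes,I,\alpha,\lambda,\varrho)$, so the first triangle commutes (strictly). Dually, $\cU_{\Mnd_{op}}$ and $\Mnd_{op}(\cU)$ both forget the monoidal structure and retain the underlying monad, so the second triangle commutes; the triangles for $\xi'$ against $\iota$ and $\cU_{op}$ are handled the same way. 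I expect the only real labour to be the axiom-by-axiom matching of the second step — which is the true content of the lemma — together with keeping the op/lax variances aligned and checking the product coherence; but no genuine obstacle arises, since this is fundamentally a ``one structure, two presentations'' statement.
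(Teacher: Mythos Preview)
Your proposal is correct and follows essentially the same approach as the paper: both argue that the $0$-, $1$-, and $2$-cells on each side are the same tuples of $\cD$-data subject to the same equational conditions, with the key dictionary being that $(\otimes,\varphi)$, $(I,\bar\varphi)$ being oplax monad morphisms is equivalent to $\eta,\mu$ being monoidal transformations, and $\alpha,\lambda,\varrho$ being transformations of oplax monad morphisms is equivalent to $(\cS,\varphi,\bar\varphi)$ being lax monoidal. Your treatment is in fact more explicit than the paper's about the naturality in $\cD$ and the compatibility triangles, which the paper leaves to the reader.
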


 \begin{proof}  For any $2$-category $\cD$ with products the cells in the $2$-categories $\Mon\Mnd_{op}(\cD)$, $\Mnd_{op}\Mon(\cD)$,
$\Mon_{op}\Mnd(\cD)$,  $\Mnd\Mon_{op}(\cD)$ are tuples of cells from $\cD$ satisfying certain (equational) coherence conditions.
An easy but long verification shows, for example, that $0$-cells of both  $\Mon\Mnd_{op}(\cD)$, $\Mnd_{op}\Mon(\cD)$ are tuples of cells
that differ only by the cells order, but not the conditions they satisfy. Similarly for $1$- and $2$-cells. The morphism
$\xi'$ just permutes these tuples. One can easily check that this 'permutation isomorphism' is compatible with both $\iota$ and $\cU$,
as stated in the theorem.

More explicitly, we can identify $0$-cells of both $\Mon\Mnd_{op}(\cD)$ and $\Mnd_{op}\Mon(\cD)$ as $11$-tuples
$$(\cC,\otimes,I,\alpha,\lambda,\varrho,\cS,\varphi,\bar{\varphi},\eta,\mu)$$ satisfying
certain conditions that we explain below.

In both cases $(\cC,\cS,\eta,\mu)$ must be a monad. Moreover, in  $\Mon\Mnd_{op}(\cD)$
\[ (\otimes,\varphi): (\cC\times \cC,\cS\times \cS,(\eta,\eta),(\mu,\mu)) \lra (\cC,\cS,\eta,\mu) \]
\[  (I,\bar{\varphi}):(\cC,1,1,1)\lra (\cC,\cS,\eta,\mu) \]
must be oplax morphisms of monads. This condition is equivalent to the condition that
\[\eta:(1_\cC,1,1)\lra (\cS,\varphi,\bar{\varphi}),\hskip 5mm
\mu :(\cS^2,\cS(\varphi)\circ \varphi_{\cS\times \cS},\cS(\bar{\varphi})\circ\bar{\varphi})\lra (\cS,\varphi,\bar{\varphi}) \]
are monoidal transformations of lax monoidal morphisms. The later condition is required for such tuple to be in $\Mnd_{op}\Mon(\cD)$.
Finally, the conditions that
\[ \alpha : (\otimes\circ (1\times \otimes),\varphi_{1\times\otimes}\circ\otimes(1,\varphi))\lra
           (\otimes\circ (\otimes\times 1),\varphi_{\otimes\times 1}\circ\otimes(\varphi,1)) \]
\[ \lambda :  (\otimes\circ\lk I,1_\cC \rk , \varphi_{\lk I,1_\cC\rk}\circ \otimes(\bar{\varphi},1_\cS))\lra (1_\cC,1_\cS) \]
\[ \varrho :  (\otimes\circ\lk 1_\cC, I\rk , \varphi_{\lk 1_\cC,I\rk}\circ \otimes(1_\cS,\bar{\varphi}))\lra (1_\cC,1_\cS)\]
are transformations of oplax morphisms of monads, required for the tuple to be
in $\Mon\Mnd_{op}(\cD)$ is equivalent to the condition that
$$(\cS,\varphi,\bar{\varphi}): (\cC,\otimes,I,\alpha,\lambda,\varrho) \lra (\cC,\otimes,I,\alpha,\lambda,\varrho)$$
is a lax monoidal morphisms. This is another condition required for the tuple to be in  $\Mnd_{op}\Mon(\cD)$.

In that sense the conditions imposed on such $11$-tuple to be either in $\Mon\Mnd_{op}(\cD)$ or $\Mnd_{op}\Mon(\cD)$ are the same.
The similar thing happen with $1-$ and $2$-cells in those $2$-categories. Thus they are isomorphic.

The remaining details are left for the reader. \end{proof} 

{\em Remark.} This fact is a fragment of a much wider phenomena, deserving a serious independent studies,
that if we combine together two 'algebraic structures' then they cooperate well when one is taken with
lax morphisms and the other with oplax morphisms like $\Mnd_{op}\Mnd\cong\Mnd\Mnd_{op}$, $\Mon_{op}\Mon\cong\Mon\Mon_{op}$.

\section{The Kleisli objects in $2$-categories of monoidal objects}
In this section we give a $3$-categorical proof of

\begin{theorem}\label{Kleisli_moncat} Let $\cD$ be a $2$-category with finite products that admits Kleisli objects.
 Then the $2$-category $\Mon(\cD)$ admits Kleisli object and they are standard with respect to $\cD$.
\end{theorem}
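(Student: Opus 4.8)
The argument is pure $3$-category theory: the whole statement reduces, via the isomorphism $\xi$ of Lemma~\ref{comm_structures}, to the hypothesis on $\cD$, with essentially no computation involving coherence data. Evaluating $\xi$ at $\cD$ gives an isomorphism of $2$-categories
\[ \xi_\cD : \Mnd_{op}(\Mon(\cD)) \stackrel{\sim}{\lra} \Mon(\Mnd_{op}(\cD)), \]
which makes sense since $\Mon(\cD)$ is again a $2$-category with finite products ($\Mon$ being a $3$-endofunctor of $2\bCat_\times$). The compatibility of $\xi$ with $\iota$ and $\cU$ (the last part of Lemma~\ref{comm_structures}) reads, at $\cD$,
\[ \xi_\cD\circ\iota_{op}^{\Mon(\cD)} = \Mon(\iota_{op}^{\cD}), \qquad \cU_{\Mnd_{op}(\cD)}\circ\xi_\cD = \Mnd_{op}(\cU_\cD), \]
where $\cU_\cD : \Mon(\cD)\to\cD$ is the forgetful $2$-functor.

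\emph{Existence.} By hypothesis $\iota_{op}^{\cD}$ has a left $2$-adjoint $\cK_\cD$, and since $\cD$ has finite products and $\cK_\cD$ preserves them (the argument of the lemma on preservation of products above is not special to $\bCat$), the $2$-adjunction $\cK_\cD\dashv\iota_{op}^{\cD}$ is an adjunction internal to the $3$-category $2\bCat_\times$. Any $3$-functor sends an adjunction between $1$-cells to an adjunction, so applying $\Mon$ yields $\Mon(\cK_\cD)\dashv\Mon(\iota_{op}^{\cD})$. By the first identity above $\Mon(\iota_{op}^{\cD}) = \xi_\cD\circ\iota_{op}^{\Mon(\cD)}$; composing the left adjoint with $\xi_\cD^{-1}$ we conclude that $\iota_{op}^{\Mon(\cD)}$ has the left $2$-adjoint $\cK_{\Mon(\cD)} := \Mon(\cK_\cD)\circ\xi_\cD$. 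Hence $\Mon(\cD)$ admits Kleisli objects.

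\emph{Standardness with respect to $\cD$.} Take $G = \cU_\cD : \Mon(\cD)\to\cD$. The $\iota_{op}$- and $|-|$-squares for $\cU_\cD$ commute on the nose (both just say that forgetting the monoidal structure commutes with passing to, resp.\ forgetting, a monad), so only the square of $\cK$'s is at issue, and there
\[ \cU_\cD\circ\cK_{\Mon(\cD)} = \cU_\cD\circ\Mon(\cK_\cD)\circ\xi_\cD = \cK_\cD\circ\cU_{\Mnd_{op}(\cD)}\circ\xi_\cD = \cK_\cD\circ\Mnd_{op}(\cU_\cD), \]
the middle equality being the naturality of the $3$-transformation $\cU : \Mon\Ra\Id$ at the $1$-cell $\cK_\cD$ of $2\bCat_\times$, and the last being the second identity above. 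This is exactly the commutativity (up to the coherence isomorphism carried by $\xi$) defining standardness with respect to $\cD$.

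\emph{Expected obstacle.} The whole argument is transport of structure along $\xi$ plus the fact that $3$-functors preserve adjunctions; the only genuinely non-formal ingredient is that $\cK_\cD$ preserves finite products of $0$-cells. This is precisely what allows $\cK_\cD$ to be regarded as a $1$-cell of $2\bCat_\times$, so that $\Mon$ can be applied to the adjunction $\cK_\cD\dashv\iota_{op}^{\cD}$ and $\cU$ can be used naturally at $\cK_\cD$; I would need to verify that the Kleisli object, although a colimit-type construction, has this product-preservation property in the present generality. Granting that, the dual result (for $\Mon_{op}(\cD)$, oplax monoidal monads, and Eilenberg--Moore objects) follows verbatim with $\xi$ replaced by $\xi'$.
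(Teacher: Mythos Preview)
Your approach is essentially the paper's: apply the $3$-functor $\Mon$ to the adjunction $\cK_\cD\dashv\iota_{op,\cD}$, transport along the isomorphism $\xi_\cD$ of Lemma~\ref{comm_structures} to obtain $\cK_{\Mon(\cD)}=\Mon(\cK_\cD)\circ\xi_\cD$, and read off standardness from the naturality of $\cU$. You correctly flag product-preservation of $\cK_\cD$ as the one non-formal ingredient needed to make $\Mon(\cK_\cD)$ well-defined; the paper uses this implicitly and only proves it for $\bCat$ (the lemma in Subsection~\ref{cat_description}), so your caution there is well-placed rather than a gap relative to the paper.
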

 \begin{proof}
 Using Lemma \ref{comm_structures} and the fact that $\cU$ is $3$-natural transformation we see that that
the diagram of $3$-functors and $3$-natural transformation
\begin{center} \xext=2300 \yext=1300
\begin{picture}(\xext,\yext)(\xoff,\yoff)
\setsqparms[-1`1`1`1;1200`600]
 \putsquare(0,50)[\Mon`\Mon\Mnd_{op}`\Id`\Mnd_{op};\Mon(\cK)`\cU`\cU_{\Mnd_{op}}`\iota_{op}]
 \putmorphism(0,100)(1,0)[\phantom{\Id}`\phantom{\Mnd_{op}}`\cK]{1200}{-1}a
 \putmorphism(0,600)(1,0)[\phantom{\Mon}`\phantom{\Mon\Mnd_{op}}`\Mon(\iota_{op})]{1200}{1}b
  \put(1700,1180){\makebox(300,100){$\Mnd_{op}\Mon$}}
  \put(1260,960){\makebox(300,100){$\xi$}}
  \put(1980,680){\makebox(300,100){$\Mnd_{op}(\cU)$}}
  \put(380,1040){\makebox(300,100){$(\iota_{op})_\Mon$}}

\put(1650,1150){\vector(-1,-1){400}}
\put(1850,350){\vector(-2,-1){500}}
\put(1850,350){\line(0,1){790}}

  \put(0,730){\line(3,2){300}}
  \put(300,930){\vector(4,1){1200}}
 \end{picture}
\end{center}
commutes. Evaluating this diagram at a $2$-category  $\cD$  with finite products that admits Kleisli objects we get a commuting
diagram of $2$-categories and $2$-functors
\begin{center} \xext=2300 \yext=1300
\begin{picture}(\xext,\yext)(\xoff,\yoff)
\setsqparms[-1`1`1`1;1200`600]
 \putsquare(0,50)[\Mon(\cD)`\Mon\Mnd_{op}(\cD)`\cD`\Mnd_{op}(\cD);\Mon(\cK_\cD)`\cU_\cD`\cU_{\Mnd_{op}(\cD)}`\iota_{op,\cD}]
 \putmorphism(0,100)(1,0)[\phantom{\cD}`\phantom{\Mnd_{op}(\cD)}`\cK_\cD]{1200}{-1}a
 \putmorphism(0,600)(1,0)[\phantom{\Mon(\cD)}`\phantom{\Mon\Mnd_{op}(\cD)}`\Mon(\iota_{op,\cD})]{1200}{1}b
  \put(1700,1180){\makebox(300,100){$\Mnd_{op}\Mon(\cD)$}}
  \put(1260,960){\makebox(300,100){$\xi_\cD$}}
  \put(2000,680){\makebox(300,100){$\Mnd_{op}(\cU_\cD)$}}
  \put(380,1070){\makebox(300,100){$(\iota_{op})_{\Mon(\cD)}$}}

\put(1650,1150){\vector(-1,-1){400}}
\put(1850,350){\vector(-2,-1){470}}
\put(1850,350){\line(0,1){790}}

  \put(0,730){\line(3,2){300}}
  \put(300,930){\vector(4,1){1150}}
 \end{picture}
\end{center}
As $\cK_{\cD}\dashv \iota_{op,\cD}$ and the $3$-functor $\Mon$ preserves $2$-adjunctions, we have $\Mon(\cK_{\cD})\dashv \Mon(\iota_{op,\cD})$.
Since $\xi_\cD$ is an isomorphism we get that $\Mon(\cK_{\cD})\circ\xi_\cD\dashv \iota_{op,\Mon(\cD)}$, i.e. $\Mon(\cD)$ indeed admits Kleisli objects.
 \end{proof}
 
As $\bCat$ has products of $0$-cells and admits Kleisli objects we get
\begin{corollary}
 The $2$-category $\Mon(\bCat)$ admits standard Kleisli objects.
\end{corollary}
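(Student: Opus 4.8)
The plan is to obtain the corollary as a direct instantiation of Theorem~\ref{Kleisli_moncat}. First I would recall that $\bCat$, the $2$-category of categories, has finite products of $0$-cells --- the ordinary cartesian product of categories serves as the product of $0$-cells --- and that it admits Kleisli objects: indeed, as spelled out in Subsection~\ref{cat_description}, for a monad $(\cC,\cS,\eta,\mu)$ in $\bCat$ the Kleisli object $\cK_{\bCat}(\cS)$ is the usual category of Kleisli algebras $\cC_\cS$, so the left $2$-adjoint $\cK_{\bCat}$ to $\iota_{op,\bCat}$ exists. Hence $\cD=\bCat$ meets the hypotheses of Theorem~\ref{Kleisli_moncat}.

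Applying that theorem with $\cD=\bCat$ then yields that $\Mon(\bCat)$ admits Kleisli objects and that these are standard with respect to $\bCat$; concretely, the $2$-adjunction exhibiting them is $\Mon(\cK_{\bCat})\circ\xi_{\bCat}\dashv \iota_{op,\Mon(\bCat)}$, built from the permutation isomorphism $\xi_{\bCat}$ of Lemma~\ref{comm_structures} together with the fact that the $3$-functor $\Mon$ preserves $2$-adjunctions. The remaining point is purely terminological: ``standard Kleisli objects'' without qualification means standard with respect to $\bCat$ via the obvious forgetful functor, which for $\Mon(\bCat)$ is $\cU_{\bCat}\colon\Mon(\bCat)\to\bCat$ forgetting the monoidal structure, and this is exactly the sense of ``standard with respect to $\cD$'' delivered by Theorem~\ref{Kleisli_moncat} evaluated at $\cD=\bCat$.

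There is essentially no real obstacle here: the only place calling for a moment's attention is confirming that the standardness ``with respect to $\cD$'' produced by Theorem~\ref{Kleisli_moncat} coincides with the unqualified notion of standardness once $\cD$ is itself $\bCat$ --- i.e.\ that the forgetful functor implicit in the phrase ``standard Kleisli objects'' is the monoidal-structure-forgetting functor $\cU_{\bCat}$ and not some other comparison functor. Granting that identification, the corollary follows at once, with no further computation required.
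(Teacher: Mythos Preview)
Your proposal is correct and follows exactly the paper's approach: the corollary is obtained by instantiating Theorem~\ref{Kleisli_moncat} at $\cD=\bCat$, using that $\bCat$ has finite products and admits Kleisli objects. Your added clarification that ``standard with respect to $\bCat$'' is by definition the unqualified ``standard'' is a helpful elaboration, but otherwise there is nothing to distinguish your argument from the paper's one-line justification.
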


Note that Theorem \ref{Kleisli_moncat} can be rephrased in a slightly more general form as a lifting property.

\begin{theorem}\label{Kleisli_moncat1} The $3$-functor $\Mon$ and the $3$-transformation $\cU$ can be lifted to
$2\bCat_{\times k}$ so that the diagram of $3$-categories, $3$-functors, and $3$-transformations commutes up to a canonical isomorphism
\begin{center} \xext=1800 \yext=1150
\begin{picture}(\xext,\yext)(\xoff,\yoff)

\putmorphism(0,1050)(1,0)[\phantom{2\bCat_{\times k}}`\phantom{2\bCat_{\times k}}`\overline{\Mon}]{1800}{1}a
\putmorphism(0,850)(1,0)[\phantom{2\bCat_{\times k}}`\phantom{2\bCat_{\times k}}`Id]{1800}{1}b
\putmorphism(0,950)(1,0)[2\bCat_{\times k}`2\bCat_{\times k}`]{1800}{0}b

\put(950,900){\makebox(300,100){$\overline{\cU}\;\Da$}}

\putmorphism(0,250)(1,0)[\phantom{2\bCat_{\times}}`\phantom{2\bCat_{\times}}`\Mon]{1800}{1}a
\putmorphism(0,50)(1,0)[\phantom{2\bCat_{\times}}`\phantom{2\bCat_{\times}}`Id]{1800}{1}b
\putmorphism(0,150)(1,0)[2\bCat_{\times}`2\bCat_{\times}`]{1800}{0}b

\put(950,100){\makebox(300,100){$\cU\;\Da$}}

\put(0,850){\vector(0,-1){600}}
\put(1750,850){\vector(0,-1){600}}
\end{picture}
\end{center}
\end{theorem}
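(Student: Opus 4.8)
The plan is to construct the two lifts ``on the nose'': I would take $\overline{\Mon}$ and $\overline{\cU}$ to be $\Mon$ and $\cU$ on every cell, and then simply verify that, once one restricts to $2\bCat_{\times k}$, all the data still lands in $2\bCat_{\times k}$; the square in the statement will then commute, the ``canonical isomorphism'' being nothing but the coherence isomorphism already produced in the proof of Theorem~\ref{Kleisli_moncat}. On $0$-cells there is essentially nothing to do: if $\cD$ is a $2$-category with finite products that admits Kleisli objects, then $\Mon(\cD)$ again has finite products, because $\Mon$ is a $3$-endofunctor of $2\bCat_\times$, and it admits Kleisli objects by Theorem~\ref{Kleisli_moncat}, so $\overline{\Mon}(\cD):=\Mon(\cD)$ is a $0$-cell of $2\bCat_{\times k}$. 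On $2$-cells and $3$-cells there is nothing at all to check, since $2\bCat_{\times k}$ is by definition full on $2$-transformations and $2$-modifications over $2\bCat$; one just restricts $\Mon$, and $3$-functoriality of $\overline{\Mon}$ is inherited.

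The substance is the $1$-cell step. Let $G\colon\cD\lra\cE$ be a $1$-cell of $2\bCat_{\times k}$, i.e. a $2$-functor preserving finite products and Kleisli objects; I must show $\Mon(G)$ does the same. Preservation of finite products is automatic. For Kleisli objects, recall from the proof of Theorem~\ref{Kleisli_moncat} that the Kleisli $2$-functor of $\Mon(\cD)$ is $\cK_{\Mon(\cD)}=\Mon(\cK_\cD)\circ\xi_\cD$, where $\xi$ is the $3$-natural isomorphism of Lemma~\ref{comm_structures}. The canonical comparison $2$-cell that must be shown invertible for $\Mon(G)$ then factors as the horizontal pasting of two pieces: the naturality square of $\xi$ at $G$, whose comparison is invertible because $\xi$ is a $3$-natural \emph{isomorphism}; and the image under $\Mon$ of the (by hypothesis invertible) comparison square witnessing that $G$ preserves Kleisli objects. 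For the second piece one uses that $\Mon$, being a $3$-functor that preserves $2$-adjunctions, carries the canonical comparison for $G$ to the canonical comparison relative to the $\Mon(\cK)$'s — this is where the compatibility of $\xi$ with $\iota$ and $\cU$ recorded in Lemma~\ref{comm_structures} enters. A pasting of invertible $2$-cells being invertible, $\Mon(G)$ preserves Kleisli objects, and we may set $\overline{\Mon}(G):=\Mon(G)$; functoriality in $G$ is inherited from $\Mon$.

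Lifting $\cU$ is then immediate. Its component $\cU_\cD\colon\Mon(\cD)\lra\cD$ preserves finite products since $\cU$ is a $3$-transformation of $2\bCat_\times$, and the second assertion of Theorem~\ref{Kleisli_moncat}, that the Kleisli objects of $\Mon(\cD)$ are \emph{standard with respect to} $\cD$, is precisely the statement that $\cU_\cD$ preserves Kleisli objects; hence $\cU_\cD$ is a $1$-cell of $2\bCat_{\times k}$. Naturality of $\overline{\cU}$ and the $3$-modification identities come for free from those of $\cU$, because the forgetful $3$-functor $2\bCat_{\times k}\lra2\bCat_\times$ is faithful on the higher cells. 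Since $\overline{\Mon}$ and $\overline{\cU}$ are thus literally $\Mon$ and $\cU$ read inside $2\bCat_{\times k}$, composing them with that forgetful $3$-functor returns $\Mon$ and $\cU$, up to the coherence isomorphism $\cK_{\Mon(\cD)}\cong\Mon(\cK_\cD)\circ\xi_\cD$ (which may be normalised to an identity once that formula is fixed as the chosen Kleisli $2$-functor); so the square in the statement commutes up to a canonical isomorphism, as claimed.

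I expect the one real obstacle to be the middle step: making precise exactly what ``the canonical $2$-natural transformation'' in the preservation square is, and confirming that applying $\Mon$ to it and pasting with the $\xi$-naturality square reproduces that same canonical comparison for $\Mon(G)$ rather than merely some invertible $2$-cell. This is a diagram chase in $2\bCat$ that relies only on facts already in hand — that $\Mon$ preserves $2$-adjunctions and that $\xi$ is $3$-natural and compatible with $\iota$ and $\cU$ — and everything else is routine bookkeeping transported along the faithful forgetful $3$-functor.
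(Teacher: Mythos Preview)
Your proposal is correct and follows the same route as the paper. The paper's own proof is in fact briefer than yours: it cites Theorem~\ref{Kleisli_moncat} for the $0$-cell step, says that the $1$-cell step ``can be proved using a similar argument as the one in Theorem~\ref{Kleisli_moncat}'' and leaves it to the reader, and does not spell out the lifting of $\cU$ at all; your sketch of the $1$-cell step via the factorisation through the $\xi$-naturality square and $\Mon$ applied to the comparison for $G$ is exactly the ``similar argument'' the paper has in mind.
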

 \begin{proof}
 From Theorem \ref{Kleisli_moncat}, we know that, if we apply the $3$-functor $\Mon$ to
a $2$-category $\cD$ that has not only finite products but also Kleisli objects, then we will get
$\Mon(\cD)$ that also has finite products and Kleisli objects. We need also to verify that $\Mon$ applied
to a $2$-functor $F:\cD\ra \cD'$ that preserve Kleisli objects also preserves Kleisli objects. This can be proved
using a similar argument as the one in Theorem \ref{Kleisli_moncat}. We leave it for the reader.  \end{proof}

\section{The EM objects in $2$-categories of monoidal objects}
The dual statement of Theorem \ref{Kleisli_moncat} is

\begin{theorem}\label{EM_moncat} Let $\cD$ be a $2$-category with finite products admitting EM objects.
 Then the $2$-category $\Mon_{op}(\cD)$ admits EM objects and they are standard with respect to $\cD$.
\end{theorem}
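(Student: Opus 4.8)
The plan is to run the proof of Theorem~\ref{Kleisli_moncat} in mirror image, replacing the Kleisli adjunction $\cK\dashv\iota_{op}$ by the Eilenberg--Moore adjunction $\iota\dashv EM$, the square of Lemma~\ref{comm_structures} relating $\Mon$ and $\Mnd_{op}$ by the one relating $\Mon_{op}$ and $\Mnd$, the $3$-isomorphism $\xi$ by $\xi'$, and the forgetful $3$-transformation $\cU$ by $\cU_{op}$. So the first step is to record, straight from Lemma~\ref{comm_structures}, that the square of $3$-functors built from $\Mon_{op}$ and $\Mnd$ commutes up to the natural $3$-isomorphism $\xi':\Mnd\,\Mon_{op}\Ra\Mon_{op}\,\Mnd$, and that $\xi'$ is compatible with $\iota$ and $\cU_{op}$ in the sense that $\xi'\circ(\iota)_{\Mon_{op}}=\Mon_{op}(\iota)$ and $(\cU_{op})_\Mnd\circ\xi'=\Mnd(\cU_{op})$.

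Next I would assemble, just as in the proof of Theorem~\ref{Kleisli_moncat}, the commuting diagram of $3$-functors and $3$-transformations obtained from $\Mon_{op}(EM)$, $\cU_{op}$, $\xi'$ and the embedding $\iota$, and evaluate it at a fixed $2$-category $\cD$ with finite products that admits EM objects. This produces a commuting diagram of $2$-categories and $2$-functors in which $\xi'_\cD:\Mnd\,\Mon_{op}(\cD)\to\Mon_{op}\,\Mnd(\cD)$ is an isomorphism and whose outer part records $\xi'_\cD\circ\iota_{\Mon_{op}(\cD)}=\Mon_{op}(\iota_\cD)$.

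The core step is then the adjunction transport, now with $\iota$ on the left. Since $\iota_\cD\dashv EM_\cD$ and the $3$-functor $\Mon_{op}$ preserves $2$-adjunctions (exactly as $\Mon$ does), we get $\Mon_{op}(\iota_\cD)\dashv\Mon_{op}(EM_\cD)$. Rewriting $\iota_{\Mon_{op}(\cD)}=(\xi'_\cD)^{-1}\circ\Mon_{op}(\iota_\cD)$ by the compatibility above, and using that $\xi'_\cD$ is an isomorphism, we see that $\iota_{\Mon_{op}(\cD)}$ has the right $2$-adjoint $\Mon_{op}(EM_\cD)\circ\xi'_\cD$; hence $\Mon_{op}(\cD)$ admits EM objects, with $EM_{\Mon_{op}(\cD)}\cong\Mon_{op}(EM_\cD)\circ\xi'_\cD$. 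For the ``standard with respect to $\cD$'' clause I would unwind the definition of standard EM objects: one must check that the square comparing $EM_{\Mon_{op}(\cD)}$ with $EM_\cD$ along the forgetful $2$-functors commutes up to the canonical isomorphism. Feeding in $EM_{\Mon_{op}(\cD)}\cong\Mon_{op}(EM_\cD)\circ\xi'_\cD$, this reduces to the identity $\cU_{op,\cD}\circ\Mon_{op}(EM_\cD)=EM_\cD\circ\cU_{op,\Mnd(\cD)}$ ($3$-naturality of $\cU_{op}$) composed with $\cU_{op,\Mnd(\cD)}\circ\xi'_\cD=\Mnd(\cU_{op,\cD})$ (compatibility of $\xi'$ with $\cU_{op}$), which together give precisely the other composite $EM_\cD\circ\Mnd(\cU_{op,\cD})$.

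The step I expect to be the real obstacle is not the adjunction juggling but the bookkeeping of (op)laxness: checking that it is genuinely $\xi'$ --- pairing oplax monoidal morphisms with \emph{lax} monad morphisms, consistent with the remark following Lemma~\ref{comm_structures} --- that is needed here, and confirming that $\Mon_{op}$, not merely $\Mon$, preserves $2$-adjunctions (and, if one also wants the lifting reformulation in the style of Theorem~\ref{Kleisli_moncat1}, carries EM-preserving $2$-functors to EM-preserving $2$-functors). Both are routine once the conventions are pinned down, but they are exactly where a direction error would hide.
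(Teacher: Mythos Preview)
Your proposal is correct and is exactly the approach the paper intends: Theorem~\ref{EM_moncat} is presented as the dual of Theorem~\ref{Kleisli_moncat} with no separate proof, and you have supplied precisely that dual, replacing $\cK\dashv\iota_{op}$, $\Mon$, $\xi$, $\cU$ by $\iota\dashv EM$, $\Mon_{op}$, $\xi'$, $\cU_{op}$ and invoking the second square and second pair of triangles in Lemma~\ref{comm_structures}. If anything, your treatment of the ``standard with respect to $\cD$'' clause is more explicit than the paper's own handling in the proof of Theorem~\ref{Kleisli_moncat}.
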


Putting $\cD$ to be $\bCat$ in the above Theorem, we obtain a result by I. Moerdijk \cite{Mo} in a sharper version
of P. McCrudden \cite{McC}

\begin{corollary} [Moerdijk, McCrudden]
 The $2$-category $\Mon_{op}(\bCat)$ admits standard EM objects.
\end{corollary}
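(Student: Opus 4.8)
The Corollary is the special case $\cD=\bCat$ of Theorem~\ref{EM_moncat}: the $2$-category $\bCat$ has finite products of $0$-cells (ordinary products of categories) and admits Eilenberg--Moore objects (the EM object of a monad in $\bCat$ is its usual category of algebras), so it suffices to prove Theorem~\ref{EM_moncat}. The plan for that is to run the argument of Theorem~\ref{Kleisli_moncat} in the mirror, systematically replacing the left $2$-adjoint $\cK\dashv\iota_{op}$ by the right $2$-adjoint $\iota\dashv EM$, the $3$-functor $\Mon$ by $\Mon_{op}$, the $3$-transformation $\cU$ by $\cU_{op}$, and the coherence $3$-isomorphism $\xi$ of Lemma~\ref{comm_structures} by its companion $\xi'\colon\Mnd\Mon_{op}\cong\Mon_{op}\Mnd$.

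Concretely, I would first combine the second part of Lemma~\ref{comm_structures} --- that $\xi'$ is a $3$-isomorphism compatible with $\iota$ and with $\cU_{op}$, so that $\xi'\circ(\iota)_{\Mon_{op}}=\Mon_{op}(\iota)$ and $(\cU_{op})_{\Mnd}\circ\xi'=\Mnd(\cU_{op})$ --- with the $3$-naturality of $\cU_{op}\colon\Mon_{op}\Rightarrow\Id$ applied to the (finite-product-preserving) $2$-functor $EM\colon\Mnd\to\Id$, so as to assemble a commuting diagram of $3$-functors and $3$-transformations relating $\Mon_{op}$, $\Mon_{op}\Mnd$, $\Mnd\Mon_{op}$, $\Id$ and $\Mnd$, dual to the one displayed in the proof of Theorem~\ref{Kleisli_moncat}. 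Evaluating this diagram at a $2$-category $\cD$ with finite products admitting EM objects yields a commuting diagram of $2$-categories and $2$-functors. Since $\iota_\cD\dashv EM_\cD$ and the $3$-functor $\Mon_{op}$ preserves $2$-adjunctions, $\Mon_{op}(\iota_\cD)\dashv\Mon_{op}(EM_\cD)$; and since $\Mon_{op}(\iota_\cD)=\xi'_\cD\circ\iota_{\Mon_{op}(\cD)}$ with $\xi'_\cD$ invertible, sliding $\xi'_\cD$ through the adjunction gives $\iota_{\Mon_{op}(\cD)}\dashv\Mon_{op}(EM_\cD)\circ\xi'_\cD$. Hence $\Mon_{op}(\cD)$ admits EM objects, with right adjoint $EM_{\Mon_{op}(\cD)}=\Mon_{op}(EM_\cD)\circ\xi'_\cD$. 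For the clause ``standard with respect to $\cD$'' I would take $G$ to be the forgetful $2$-functor $\cU_{op,\cD}\colon\Mon_{op}(\cD)\to\cD$ and chase one square: $3$-naturality of $\cU_{op}$ at $EM_\cD$ gives $\cU_{op,\cD}\circ\Mon_{op}(EM_\cD)=EM_\cD\circ\cU_{op,\Mnd(\cD)}$, the triangle $(\cU_{op})_{\Mnd}\circ\xi'=\Mnd(\cU_{op})$ evaluated at $\cD$ gives $\cU_{op,\Mnd(\cD)}\circ\xi'_\cD=\Mnd(\cU_{op,\cD})$, and combining these yields $\cU_{op,\cD}\circ EM_{\Mon_{op}(\cD)}=EM_\cD\circ\Mnd(\cU_{op,\cD})$, which is exactly standardness. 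Taking $\cD=\bCat$ then gives the Corollary.

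The routine-but-lengthy ingredient is Lemma~\ref{comm_structures} itself, already left to the reader; granting it, the argument above is purely formal, a mechanical transcription of the proof of Theorem~\ref{Kleisli_moncat}, and I do not expect the verifications to present any difficulty. The one point where care is genuinely needed --- indeed the whole reason the dualisation works --- is the pairing of the two ``laxnesses'': one must combine $\Mon_{op}$ (oplax monoidal morphisms) with $\Mnd$ (lax morphisms of monads), so that an oplax monoidal monad is literally the same datum as a monoidal object with oplax structure cells inside $\Mnd(\cD)$. This is precisely why $EM$, being built from $\Mnd$, is the construction that descends to $\Mon_{op}(\cD)$, and why the resulting EM objects come out standard rather than merely abstract.
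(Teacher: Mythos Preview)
Your proposal is correct and follows exactly the route the paper intends: the Corollary is obtained from Theorem~\ref{EM_moncat} by taking $\cD=\bCat$, and Theorem~\ref{EM_moncat} itself is stated in the paper simply as ``the dual statement of Theorem~\ref{Kleisli_moncat}'' with no separate proof. You have in fact written out more than the paper does, spelling out how $\xi'$, the adjunction $\iota_\cD\dashv EM_\cD$, and the compatibility triangles of Lemma~\ref{comm_structures} combine to give the right adjoint $\Mon_{op}(EM_\cD)\circ\xi'_\cD$ and to verify standardness; this is precisely the mechanical dualisation the paper has in mind.
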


\section{Some other algebraic structures}

If we replace the $3$-functor $\Mon$ ($\Mon_{op}$) by the $3$-functor  $\BMon$ ($\BMon_{op}$) of braided monoidal
objects with lax (oplax) monoidal morphisms and monoidal transformations or $3$-functor $\SMon$ ($\SMon_{op}$)
of symmetric monoidal objects with lax (oplax) monoidal morphisms and monoidal transformations or $3$-functor $\Cmd$ ($\Cmd_{op})$
of comonads with lax (oplax) monoidal morphisms and transformations, or even $3$-functor $\Mnd$ ($\Mnd_{op})$,
we can repeat the whole reasoning again. In this way we obtain

\begin{theorem}\label{Kleisli_moncat2}

Let $\cD$ be a $2$-category that admits Kleisli objects.
Then the $2$-categories $\Mnd(\cD)$ and $\Cmd(\cD)$ admit Kleisli object and they are standard with respect to $\cD$.

Moreover, if $\cD$ has finite products, then  the $2$-categories $\BMon(\cD)$, $\SMon(\cD)$
admit Kleisli object and they are standard with respect to $\cD$.
\end{theorem}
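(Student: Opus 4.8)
The plan is to reduce the whole statement to four instances of Lemma~\ref{comm_structures} --- one for each of the $3$-functors $\cE\in\{\Mnd,\Cmd,\BMon,\SMon\}$ --- and then to copy the argument of Theorem~\ref{Kleisli_moncat} word for word with $\Mon$ replaced by $\cE$. So the first thing I would do is establish, for $\cE\in\{\Mnd,\Cmd\}$, a $3$-natural isomorphism
\[ \xi_\cE : \Mnd_{op}\,\cE \;\stackrel{\sim}{\Longrightarrow}\; \cE\,\Mnd_{op} \]
of $3$-endofunctors on $2\bCat$, and for $\cE\in\{\BMon,\SMon\}$ the same isomorphism of $3$-endofunctors on $2\bCat_\times$; in each case $\xi_\cE$ must be compatible with the embedding $\iota_{op}$ and with the evident forgetful $3$-transformation $\cV_\cE:\cE\Ra\Id$, in exactly the manner in which $\xi$ is compatible with $\iota_{op}$ and $\cU$ in Lemma~\ref{comm_structures}. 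For $\cE=\Mnd$ this is the isomorphism $\Mnd_{op}\Mnd\cong\Mnd\Mnd_{op}$ already flagged in the Remark after Lemma~\ref{comm_structures}; the other three I expect to go through by the same method.

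Granting these isomorphisms, the remainder is the diagram chase of Theorem~\ref{Kleisli_moncat}. Fix a $2$-category $\cD$ that admits Kleisli objects (and, in the $\BMon$ and $\SMon$ cases, finite products), so that $\cK_\cD\dashv\iota_{op,\cD}$ in $\cD$. Since $\cE$ is a $3$-functor it preserves $2$-adjunctions, whence $\cE(\cK_\cD)\dashv\cE(\iota_{op,\cD})$. The $\iota_{op}$-compatibility of $\xi_\cE$ gives $\cE(\iota_{op,\cD})=\xi_{\cE,\cD}\circ\iota_{op,\cE(\cD)}$, and therefore $\cE(\cK_\cD)\circ\xi_{\cE,\cD}\dashv\iota_{op,\cE(\cD)}$; so $\cE(\cD)$ admits Kleisli objects. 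The $\cV_\cE$-compatibility of $\xi_\cE$ then identifies the underlying $0$-cell of the Kleisli object in $\cE(\cD)$ of a monad with the Kleisli object in $\cD$ of its underlying monad, which is precisely the assertion that these Kleisli objects are standard with respect to $\cD$. The product hypothesis enters only through the domain of the pertinent commutation lemma, which is why $\Mnd(\cD)$ and $\Cmd(\cD)$ require no products whereas $\BMon(\cD)$ and $\SMon(\cD)$ do.

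The step I expect to be the actual work is, just as in Lemma~\ref{comm_structures}, the ``easy but long'' verification that the cells of $\Mnd_{op}\cE(\cD)$ and of $\cE\Mnd_{op}(\cD)$ are the same tuples of cells of $\cD$, constrained by the same equations but merely listed in a different order, and that the resulting ``permutation bijection'' respects $\iota_{op}$ and $\cV_\cE$. The only delicate point is the bookkeeping of (op)laxness: one must keep the monad $1$-cells oplax and the $\cE$-morphism $1$-cells lax, so that a clause reading ``$\tau$ is a transformation of oplax morphisms of monads'' on one side becomes ``$\tau$ is a lax $\cE$-morphism'' (or a transformation of such) on the other --- exactly the role played by $\alpha$, $\lambda$, $\varrho$ in the proof of Lemma~\ref{comm_structures}. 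For $\BMon$ and $\SMon$ one simply adjoins the braiding, respectively the symmetry, $2$-cell to the tuple and checks, as for $\alpha,\lambda,\varrho$, that requiring it to be a transformation of oplax morphisms of monads is the same as requiring it to be a braiding (symmetry) on the monoidal object in $\Mnd_{op}(\cD)$. For $\Cmd$ one runs the same tuple comparison as for $\Mnd_{op}\Mnd\cong\Mnd\Mnd_{op}$, observing only that the comultiplication and counit take over the roles of the monad multiplication and unit, so that no distributive law is ever involved. Once these verifications are in place the theorem follows by the formal argument above.
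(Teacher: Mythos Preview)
Your proposal is correct and is essentially identical to the paper's own argument: the paper gives no separate proof of this theorem but simply states that one replaces $\Mon$ by each of $\Mnd$, $\Cmd$, $\BMon$, $\SMon$ and ``repeats the whole reasoning again,'' which is precisely your plan of establishing the appropriate analogue of Lemma~\ref{comm_structures} for each $3$-functor $\cE$ and then rerunning the proof of Theorem~\ref{Kleisli_moncat} verbatim. Your observations about where the finite-product hypothesis is needed, the bookkeeping of (op)laxness, and the reference to the Remark after Lemma~\ref{comm_structures} for the case $\cE=\Mnd$ are all on the mark.
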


\begin{theorem}\label{EM_moncat1} Let $\cD$ be a $2$-category that admits EM objects.
Then the $2$-category $\Mnd_{op}(\cD)$ and $\Cmd_{op}(\cD)$ admit EM objects and they are standard with respect to $\cD$.

Moreover, if $\cD$ has finite products, then  the $2$-categories $\BMon_{op}(\cD)$, $\SMon_{op}(\cD)$
admit EM objects and they are standard with respect to $\cD$.
\end{theorem}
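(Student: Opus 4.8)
The plan is to run the exact dual of the proof of Theorem~\ref{EM_moncat}, which is itself the mirror image of the proof of Theorem~\ref{Kleisli_moncat}; as there, everything is reduced to establishing, for each structure in play, an analogue of the commutation Lemma~\ref{comm_structures}. Write $\Phi$ for any one of the $3$-functors $\Mnd_{op}$, $\Cmd_{op}$, $\BMon_{op}$, $\SMon_{op}$: the first two are defined on all of $2\bCat$ and the last two on $2\bCat_\times$, and in each case one has the embedding $3$-transformation $\iota$ into $\Mnd$ together with a forgetful $3$-transformation to the identity (the underlying-object transformation $|-|$ for $\Mnd_{op}$ and $\Cmd_{op}$, and the structure-forgetting $\cU_{op}$ for $\BMon_{op}$ and $\SMon_{op}$).

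First I would prove the four commutation isomorphisms
\[ \Mnd\,\Mnd_{op}\cong\Mnd_{op}\,\Mnd,\qquad \Mnd\,\Cmd_{op}\cong\Cmd_{op}\,\Mnd \]
of $3$-endofunctors on $2\bCat$, and
\[ \Mnd\,\BMon_{op}\cong\BMon_{op}\,\Mnd,\qquad \Mnd\,\SMon_{op}\cong\SMon_{op}\,\Mnd \]
of $3$-endofunctors on $2\bCat_\times$, each by the device used for Lemma~\ref{comm_structures}: for a fixed $\cD$ the cells of $\Mnd\,\Phi(\cD)$ and of $\Phi\,\Mnd(\cD)$ are tuples of cells of $\cD$ constrained by equational coherence conditions, and one checks that both families carry the same conditions and differ only in the order of their coordinates — the comultiplication/counit $2$-cells in the $\Cmd_{op}$ case and the braiding/symmetry $2$-cells in the $\BMon_{op}$/$\SMon_{op}$ case travelling along as extra slots, exactly as the monoidal data does in Lemma~\ref{comm_structures}. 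The resulting permutation isomorphism $\xi'$ must then be checked to be compatible with $\iota$ and with the forgetful $3$-transformation above, in the form of the two hexagons of Lemma~\ref{comm_structures}; the first of the four, $\Mnd\,\Mnd_{op}\cong\Mnd_{op}\,\Mnd$, is the one already announced in the Remark after that lemma.

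With these in hand the proof finishes verbatim as for Theorem~\ref{Kleisli_moncat}, dualised. Fix a $2$-category $\cD$ admitting EM objects, so $\iota_\cD$ has a right $2$-adjoint $EM_\cD$. Since $\Phi$ is a $3$-functor it preserves $2$-adjunctions, whence $\Phi(\iota_\cD)\dashv\Phi(EM_\cD)$. Evaluating the commutation diagram at $\cD$ identifies $\iota_{\Phi(\cD)}$ with $\Phi(\iota_\cD)$ up to the isomorphism $\xi'_\cD$, so that $\Phi(EM_\cD)$, composed with $\xi'_\cD$ on the appropriate side, is a right $2$-adjoint to $\iota_{\Phi(\cD)}$; hence $\Phi(\cD)$ admits EM objects. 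Compatibility of $\xi'$ with the forgetful $3$-transformation then makes the square relating the two EM-object $2$-functors to the forgetful $2$-functor down to $\cD$ commute up to the canonical isomorphism, which is precisely the assertion that these EM objects are standard with respect to $\cD$. For $\Mnd_{op}$ and $\Cmd_{op}$ no products are needed, since $\Mnd$, $\Mnd_{op}$, $\Cmd$, $\Cmd_{op}$ are defined on all of $2\bCat$; for $\BMon_{op}$ and $\SMon_{op}$ one stays inside $2\bCat_\times$ throughout.

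The main obstacle is the verification of the four commutation lemmas — not conceptually difficult, but the same kind of long bookkeeping as Lemma~\ref{comm_structures}, with one extra point of care in the $\Cmd_{op}$ case: there the comonad axioms (rather than monad axioms) occupy the coordinates dual to those of the ambient $\Mnd$, so one must confirm that the oplax comonad morphisms pair with the lax monad morphisms in exactly the way that makes the two structures cooperate, as the Remark after Lemma~\ref{comm_structures} predicts.
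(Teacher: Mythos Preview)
Your proposal is correct and follows exactly the approach the paper indicates: the paper gives no separate proof of this theorem but simply says that by replacing $\Mon$/$\Mon_{op}$ with $\BMon$, $\SMon$, $\Mnd$, $\Cmd$ (and their oplax variants) ``we can repeat the whole reasoning again'', and you have spelled out precisely what that repetition amounts to --- establish the appropriate commutation isomorphism with $\Mnd$ in the style of Lemma~\ref{comm_structures}, then transport the $2$-adjunction $\iota_\cD\dashv EM_\cD$ through the $3$-functor $\Phi$. Your observation that $\Mnd_{op}$ and $\Cmd_{op}$ need no product hypothesis, and your pointer to the Remark after Lemma~\ref{comm_structures} for the case $\Mnd\,\Mnd_{op}\cong\Mnd_{op}\,\Mnd$, are both on target.
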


{\em Remarks.}
\begin{enumerate}
  \item The above facts suggest that the results of this paper can be still generalized.
One way is to axiomatize the formal properties of the relation of $3$-functors
$\Mon$, $\BMon$, $\SMon$, $\Mnd(\cD)$, and $\Cmd(\cD)$  with respect to the $3$-functor $\Mnd_{op}$
and the relation of $3$-functors  $\Mon_{op}$, $\BMon_{op}$, $\SMon_{op}$, $\Mnd(\cD)$, and $\Cmd(\cD)$  with respect to the $3$-functor
$\Mnd$ and get this way still more abstract statement.  This would  be worth trying if there were found some new natural examples,
other than iterations of the $3$-functors listed above.
  \item The other more specific generalization would be to show that `any' algebraic $2$-categorical structure will do.
The precise formulation what such algebraic structure should be is still to be found.
The work of M. Hyland and his coworkers \cite{Hy} might be of a help.

\end{enumerate}


\begin{thebibliography}{HMP}

\bibitem[Day]{Day}
B. Day, {\em On Closed category of Functors II}. Proceedings Sydney Category Theory Seminar 1972/1973,
Lecture Notes in Math no 420, Springer-Verlag, 1974, pp. 20–54.

\bibitem[Gray]{Gray}
J. Gray, {\em Formal Category Theory: Adjointness for $2$-Categorires}. Lecture Notes in Math no 391, Springer-Verlag, 1974.

\bibitem[Hy]{Hy}
M. Hyland, {\em Sorts of algebraic theories}. Talk at International Workshop on Categorical Logic
August 28–29, 2010. Masaryk University, Brno, Czech Republic.

\bibitem[McC]{McC}
P. McCrudden, {\em Opmonoidal monads}. Theory and Applications of Categories, Vol. 10, No. 19, 2002, pp. 469–485.

\bibitem[Mo]{Mo}
I. Moerdijk, {\em Monads on tensor categories}. Journal of Pure and Applied Algebra 168 (2002) 189-208.

\bibitem[St]{St}
R. Street, {\em The Formal Theory of Monads}. Journal of Pure and Applied Algebra 2 (1972) 149-168.

\bibitem[Z]{Z}
M. Zawadowski, {\em Lax Monoidal Fibrations}, 	arXiv:0912.4464v2 [math.CT], accepted for the M.Makkai 70th birthday volume.

\end{thebibliography}
\end{document}